\documentclass[draft,reqno]{amsproc}
\usepackage[utf8]{inputenc}
\usepackage{amssymb}
\usepackage{euscript}
\usepackage{extarrows}
\usepackage{mathrsfs}
\usepackage{a4wide}
\usepackage{setspace}

\makeatletter
\@namedef{subjclassname@2010}{%
\textup{2010} Mathematics Subject Classification}
\makeatother

\newtheorem{thm}{Theorem}
\newtheorem{cor}[thm]{Corollary}
\newtheorem{lem}[thm]{Lemma}
\newtheorem{pro}[thm]{Proposition}

\theoremstyle{remark}
\newtheorem{rem}[thm]{Remark}

\theoremstyle{definition}
\newtheorem{exa}[thm]{Example}

\DeclareMathOperator{\D}{d\hspace{-0.25ex}}

\newcommand*{\ascr}{\mathscr A}
\newcommand*{\at}[1]{{\mathsf{At}} (#1)}

\newcommand*{\cbb}{\mathbb C}
\newcommand*{\card}[1]{{\rm card} (#1)}

\newcommand*{\cfw}{C_{\phi,w}}
\newcommand*{\esf}{\mathsf{E}}
\newcommand*{\efw}{\mathsf{E}_{\phi,w}}

\newcommand*{\dz}[1]{{\EuScript D}(#1)}

\newcommand*{\escr}{\mathscr E}

\newcommand*{\hh}{\mathcal H}

\newcommand*{\hfw}{{\mathsf h}_{\phi,w}}
\newcommand*{\hsf}{{\mathsf h}}

\newcommand*{\is}[2]{\langle#1,#2\rangle}
\newcommand*{\jd}[1]{\EuScript N(#1)}
\newcommand*{\ns}[1]{\EuScript C(#1)}

\newcommand*{\nbb}{\mathbb N}

\newcommand*{\ob}[1]{{\EuScript R}(#1)}

\newcommand*{\rbb}{\mathbb R}
\newcommand*{\rbop}{{\overline{\rbb}_+}}
\newcommand*{\smalloplus}{\raise0pt\hbox{$\scriptscriptstyle \oplus$}}

\newcommand*{\zbb}{\mathbb Z}

\newcommand*{\mpi}[1]{{#1}^\dag}
\newcommand*{\mpcfw}{C_{\phi,w}^\dag}
\begin{document}
\setstretch{1.2}
\title[Reciprocal of weighted composition operators]{Reciprocals of weighted composition operators in $L^2$-spaces}

\author[P.\ Budzy\'{n}ski]{Piotr Budzy\'{n}ski}
\address{Katedra Zastosowa\'{n} Matematyki, Uniwersytet Rolniczy w Krakowie, ul.\ Balicka 253c, 30-198 Kra\-k\'ow, Poland}
\email{piotr.budzynski@urk.edu.pl}

\maketitle
\begin{abstract}
Various formulas for reciprocals of densely defined weighted composition operators in $L^2$-spaces as well as for their adjoints are provided. The relation between the reciprocal of a weighted composition operator and the product of the reciprocals of the associated multiplication operator and composition operator are studied.
\end{abstract}

\section{Introduction}
In \cite{hes-pjm-1961}, Hestenes developed a spectral theory for a closed operators acting between Hilbert spaces, an analogue of the one known for selfadjoint operators. It is based on a generalization of the notion of selfadjointness (i.e., ralative selafadjointness). This in turn relied on the concepts of a reciprocal and $*$-reciprocal of a closed operators.

The reciprocal is one of the many generalized inverses that can be defined for a linear operator (see \cite{ben-gre, wan-wei-qia}). Historically, the first one (the pseudoinverse) was introduced by Fredholm (see \cite{fre}) in the context of integral operators. Another class of operators for which generalized inverses were defined and studied is the one composed of differential operators (see \cite{rei}). For finite matrices the best known general inverse is the Moore-Penrose one (see \cite{moo, pen}). Their importance stems from the various applications they have, for example in the solvability of linear systems.

For weighted composition operators in $L^2$-spaces the formula for the Moore-Penrose inverse was given by Jabbarzadeh and Jafari Bakhshkandi (see \cite{jab-bak-bbmsss}), however it was done in an limited context - the operators considered in the paper were bounded and all could be represented as products of  multiplication operators and composition operators (see \cite[Chapters 6 and 7]{b-j-j-sW} for the discussion of the limitation of this approach). The authors also calculated the stability constant (studied in \cite{jab-bims} as well).

Motivated by the above mentioned research we study reciprocals for closed weighted composition operators (without any other constraints) and their adjoints in this paper. We give formulas for them, general ones (see Theorem 1 and Proposition 3) and the ones that are valid in more specified situations (see Corrolary 6 and 7, and Proposition 8). We also study an interesting problem of comparing the reciprocals of a weighted composition operator and of the associated multiplication and composition operators (see Section 4). The methods and results are different than in \cite{jab-bak-bbmsss}. Further studies (in particular, on spectral properties and generalized Cauchy dual) is to be carried.
\section{Preliminaries}
We write $\zbb$, $\rbb$, and $\cbb$ for the sets of integers, real numbers, and complex numbers, respectively. We denote by $\nbb$, $\zbb_+$, and $\rbb_+$ the sets of positive integers, nonnegative integers, and nonegative real numbers, respectively. Set $\rbop = \rbb_+ \cup \{\infty\}$.

Let $\hh$ be a (complex) Hilbert space and $A$ be an (linear) operator in $\hh$. We denote by $\dz{A}$, $\jd{A}$, $\ob{A}$ the domain, the kernel, the range of $A$, respectively. $A^*$ stands for the adjoint of $A$ whenever it exists.

Let $(X,\ascr, \mu)$ be a $\sigma$-finite measure space and $\phi$ be an $\ascr$-measurable {\em transformation} of $X$ (i.e., an $\ascr$-measurable mapping $\phi\colon X\to X$). Let $w\colon X\to\cbb$ be $\ascr$-measurable. Define measures $\mu_w$ and $\mu_w\circ\phi^{-1}$ on $\ascr$ by $\mu_w(\sigma)=\int_\sigma|w|^2\D\mu$ and $\mu_w\circ \phi^{-1}(\sigma)=\mu_w\big(\phi^{-1}(\sigma)\big)$ for $\sigma\in\ascr$. Assume that $\mu_w\circ\phi^{-1}$ is absolutely continuous with respect to  $\mu$. Then the operator 
\begin{align*}
\cfw \colon L^2(\mu) \supseteq \dz{\cfw} \to L^2(\mu)    
\end{align*}
given by
\begin{align*}
\dz{\cfw} = \{f \in L^2(\mu) \colon w \cdot (f\circ \phi) \in L^2(\mu)\},\quad
\cfw f  = w \cdot (f\circ \phi), \quad f \in \dz{\cfw},
\end{align*}
is well-defined (see \cite[Proposition 7]{b-j-j-sW}); throughout the paper, as usual, $L^2(\mu)$ stands for the complex Hilbert space of all square $\mu$-integrable $\ascr$-measurable complex functions on $X$ (with the standard inner product). We call $\cfw$ the {\em weighted composition operator} induced by the transformation $\phi$ and the {\em weight function} $w$. By the Radon-Nikodym theorem (cf.\ \cite[Theorem 2.2.1]{ash}), there exists a unique (up to a set of $\mu$-measure zero) $\ascr$-measurable function $\hfw\colon X \to \rbop$ such that
 \begin{align} \label{l1-new}
\mu_w \circ \phi^{-1}(\varDelta) = \int_{\varDelta} \hfw \D \mu, \quad
\varDelta \in \ascr.
\end{align}
It follows from \cite[Theorem 1.6.12]{ash} and \cite[Theorem 1.29]{rud} that for every $\ascr$-measurable function $f \colon X \to \rbop$ (or for every $\ascr$-measurable function $f\colon X \to \cbb$ such that $f\circ \phi \in L^1(\mu_w)$),
\begin{align} \label{l2-new}
\int_X f \circ \phi \D\mu_w = \int_X f \, \hfw \D \mu.
\end{align}
Assume that $\cfw$ is densely defined. Then for a given $\ascr$-measurable function $f\colon X\to \rbop$ (or $f\colon X\to \cbb$ such that $f\in L^p(\mu_w)$ with some $1\leqslant p<\infty$) one may consider $\efw(f)$, the conditional expectation of $f$ with respect to the $\sigma$-algebra $\phi^{-1}(\ascr)$ and the measure $\mu_w$ (see \cite[Section 2.4]{b-j-j-sW}); recall that the formal expression $\efw(f)\circ\phi^{-1}$ stands for a unique (up to sets of $\mu$-measure zero) $\ascr$-measurable function on $X$ such that $\efw(f)\circ\phi^{-1}=0$ a.e. $[\mu]$ on $\{\hfw=0\}$ and $\efw(f)=\big(\efw(f)\circ\phi^{-1}\big)\circ\phi$ a.e. $[\mu_w]$.

An important class of weighted composition operators is that formed by composition operators: weighted composition operators with the weight function equal to $1$ a.e. $[\mu]$, denoted by ${\mathbf 1}$. When talking of them we will use simplified notation: $C_\phi:=C_{\phi, {\mathbf 1}}$, $\hsf_\phi:=\hsf_{\phi, {\mathbf 1}}$, and $\esf_\phi:=\esf_{\phi, {\mathbf 1}}$.

The reader is referred to \cite{b-j-j-sW} for the most recent and comprehensive account on (bounded and unbounded) weighted composition operators in $L^2$-spaces.


\section{The reciprocal of $\cfw$}
Let $\hh$ be a Hilbert space and $T$ be a linear operator in $\hh$ such that $\dz{T}=\jd{T}\oplus \ns{T}$, where $\ns{T}=\dz{T}\cap \jd{T}^\perp$. Then there exists a (unique) densely defined operator $\mpi{T}$ defined on $\dz{\mpi{T}}=\ob{T}\oplus {\ob{T}}^\perp$ that satisfies
\begin{align*}
    T\mpi{T}f&=P_{\overline{\ob{T}}}f, \quad f\in\dz{\mpi{T}},\\
    \mpi{T}Tg&=P_{{\jd{T}^\perp}}g, \quad g\in\dz{{T}},
\end{align*}
and
\begin{align*}
    \jd{\mpi{T}}={\ob{T}}^\perp.
\end{align*}
The operator $T^\dag$ is called a {\em reciprocal} of $T$ (see \cite{hes-pjm-1961}) or a {\em maximal Tseng inverse} of $T$ (see \cite[Chapter 9]{ben-gre}). We shall use the former name. In case $T$ is bounded and has a closed range, $T^\dag$ coincides with the Moore-Penrose inverse of $T$. 

The condition $\dz{T}=\jd{T}\oplus \ns{T}$ is satisfied for $T$ that have a closed kernel, in particular for a closed $T$. If $T$ is closed, then $T^\dag$ is closed too. Clearly, since each weighted composition operator is closed, the reciprocal $\mpcfw$ of $\cfw$ exists whenever $\cfw$ is well-defined. 

The following theorem gives a description of the reciprocal for a densely defined weighted composition operator. Here and later on, we use the notation\footnote{We follow standard measure-theoretic conventions: $\frac{1}{0}=\infty$ and $0\cdot \infty =0$.}: for $f\in L^2(\mu)$, $f_w:=\chi_{\{w\neq 0\}}\frac{f}{w}$.
\begin{thm}\label{bazant}
Assume $\cfw$ is densely defined. Then the following conditions hold:
\begin{enumerate}
\item[(i)] $\dz{\mpcfw}=\{f\in L^2(\mu) \colon \efw(f_w)\circ\phi^{-1}\in L^2(\mu)\}$,
\item[(ii)] $\mpcfw f=\efw(f_w)\circ\phi^{-1}$ for $f\in\dz{\mpcfw}$.
\end{enumerate}
\end{thm}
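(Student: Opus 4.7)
The plan is to define $S$ by $Sf:=\efw(f_w)\circ\phi^{-1}$ on the prescribed set $\dz{S}:=\{f\in L^2(\mu):\efw(f_w)\circ\phi^{-1}\in L^2(\mu)\}$, verify that $S$ satisfies the four defining conditions of the reciprocal (the two operator identities, the kernel formula, and the domain equality $\ob{\cfw}\oplus\ob{\cfw}^\perp$), and conclude $S=\mpcfw$ by uniqueness.

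The backbone is a pair of measure-theoretic descriptions of the relevant projections. From \eqref{l2-new} applied to $|g|^2$ one has $\|\cfw g\|^2=\int|g|^2\hfw\D\mu$, hence
\begin{align*}
\jd{\cfw}=\{g\in L^2(\mu):g\,\chi_{\{\hfw>0\}}=0\},\qquad P_{\jd{\cfw}^\perp}g=g\,\chi_{\{\hfw>0\}}.
\end{align*}
For the range, the identity $\bar f=\bar w\,\overline{f_w}$ on $\{w\neq 0\}$ gives
\begin{align*}
\langle\cfw g,f\rangle_{L^2(\mu)}=\langle g\circ\phi,f_w\rangle_{L^2(\mu_w)},\qquad g\in\dz{\cfw},\ f\in L^2(\mu).
\end{align*}
Since $\efw$ is the orthogonal projection in $L^2(\mu_w)$ onto the $\phi^{-1}(\ascr)$-measurable functions, a density argument (see the final paragraph) for $\{g\circ\phi:g\in\dz{\cfw}\}$ produces
\begin{align*}
\ob{\cfw}^\perp=\{f\in L^2(\mu):\efw(f_w)=0\text{ a.e.\ }[\mu_w]\},\qquad P_{\overline{\ob{\cfw}}}f=w\cdot\efw(f_w);
\end{align*}
the membership $w\cdot\efw(f_w)\in L^2(\mu)$ follows from $\int|w|^2|\efw(f_w)|^2\D\mu=\|\efw(f_w)\|_{L^2(\mu_w)}^2\le\|f\|^2$.

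The reciprocal conditions then fall out. Using $\chi_{\{w\neq 0\}}=1$ a.e.\ $[\mu_w]$ and $\chi_{\{\hfw>0\}}\circ\phi=1$ a.e.\ $[\mu_w]$ (the latter from \eqref{l2-new} with $\chi_{\{\hfw=0\}}$), for $g\in\dz{\cfw}$ one has $(\cfw g)_w=g\circ\phi$ a.e.\ $[\mu_w]$, and the definition of $\cdot\circ\phi^{-1}$ yields $S\cfw g=g\,\chi_{\{\hfw>0\}}=P_{\jd{\cfw}^\perp}g$. For $f\in\dz{S}$, the estimate from the previous paragraph shows $Sf\in\dz{\cfw}$, whence $\cfw Sf=w\cdot((Sf)\circ\phi)=w\cdot\efw(f_w)=P_{\overline{\ob{\cfw}}}f$. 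The formula $\jd{S}=\ob{\cfw}^\perp$ is immediate, and the decomposition $f=\cfw(Sf)+(f-\cfw(Sf))$ exhibits any $f\in\dz{S}$ as an element of $\ob{\cfw}+\ob{\cfw}^\perp$ (the first summand is in $\ob{\cfw}$ since $Sf\in\dz{\cfw}$; the second equals $f-P_{\overline{\ob{\cfw}}}f\in\ob{\cfw}^\perp$). The reverse inclusion $\ob{\cfw}+\ob{\cfw}^\perp\subseteq\dz{S}$ follows by applying $S$ separately to each summand using the formulas above.

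The main obstacle is the density claim $\overline{\{g\circ\phi:g\in\dz{\cfw}\}}=L^2(\phi^{-1}(\ascr),\mu_w)$: every element of the target has the form $\tilde g\circ\phi$ for some $\ascr$-measurable $\tilde g$, but $\tilde g$ need not lie in $L^2(\mu)$. Truncating via $\tilde g\,\chi_{\{|\tilde g|\le n\}\cap X_k}$ along a $\sigma$-finite exhaustion $\{X_k\}$ of $X$ keeps the approximants in $L^2(\mu)\cap\dz{\cfw}$ and, by dominated convergence against $|\tilde g|^2\hfw\in L^1(\mu)$, yields the required $L^2(\mu_w)$-convergence $\tilde g_n\circ\phi\to\tilde g\circ\phi$.
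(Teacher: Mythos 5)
Your proof is correct and follows the same skeleton as the paper's: define the candidate operator on the prescribed domain, verify the two product identities $S\cfw=P_{\jd{\cfw}^\perp}$ and $\cfw S=P_{\overline{\ob{\cfw}}}$ together with $\jd{S}=\ob{\cfw}^\perp$, and invoke uniqueness; the central computations ($(\cfw g)_w=g\circ\phi$ a.e.\ $[\mu_w]$, and the splitting $f=w\,\efw(f_w)+(f-w\,\efw(f_w))$) are exactly those in the paper. The difference is one of sourcing rather than strategy. Where the paper cites the monograph \cite{b-j-j-sW} for the kernel description of $\cfw$, the estimate $\|\efw(f_w)\|_{L^2(\mu_w)}\le\|f\|$, and the characterization $\jd{\cfw^*}=\{f:\efw(f_w)=0\}$, you re-derive these from the pairing $\is{\cfw g}{f}=\is{g\circ\phi}{f_w}_{L^2(\mu_w)}$ plus a truncation/exhaustion argument showing $\{g\circ\phi:g\in\dz{\cfw}\}$ is dense in $L^2(\phi^{-1}(\ascr),\mu_w)$ --- that density lemma (correctly handled by dominating with $|\tilde g|^2\hfw\in L^1(\mu)$) is the genuine substitute for the paper's appeal to \cite[Proposition 17(iii)]{b-j-j-sW}, and it buys you a self-contained proof at the cost of needing the Doob--Dynkin factorization and the $\sigma$-finite exhaustion. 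You also simplify the paper's identity \eqref{valencia} by observing directly that $\chi_{\{w\neq0\}}=1$ a.e.\ $[\mu_w]$ and $\chi_{\{\hfw>0\}}\circ\phi=1$ a.e.\ $[\mu_w]$, and you explicitly verify the domain equality $\dz{S}=\ob{\cfw}\oplus\ob{\cfw}^\perp$, a point the paper leaves implicit; both are small but real improvements in completeness.
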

\begin{proof}
Let $\tilde C$ be an operator in $L^2(\mu)$ defined by
\begin{align*}
\dz{\tilde C}&=\{f\in L^2(\mu) \colon \efw(f_w)\circ\phi^{-1}\in L^2(\mu)\},\\
\tilde C f&=\efw(f_w)\circ\phi^{-1},\quad f\in\dz{\mpcfw}.
\end{align*}
Clearly, $\tilde C$ is a well-defined in operator in $L^2(\mu)$. 

Take $f\in \dz{\cfw}$. Then we have
\begin{align*}
(\cfw f)_w=\chi_{\{w\neq0\}}\frac{w \cdot (f\circ\phi)}{w}=\chi_{\{w\neq0\}}\cdot(f\circ \phi)\quad \text{a.e. }[\mu],
\end{align*}
and thus (see \eqref{valencia} below; cf \cite[Section 2.4]{b-j-j-sW})
\begin{align*}
\efw\big( (\cfw f)_w \big)\circ \phi^{-1}&=\efw\big( \chi_{\{w\neq0\}}\cdot(f\circ \phi) \big)\circ \phi^{-1}\\
&=f \cdot \efw\big(\chi_{\{w\neq0\}}\big)\circ\phi^{-1}\quad \text{a.e. }[\mu].
\end{align*}
In view of \cite[Proposition 15]{b-j-j-sW} and $\chi_{\{w\neq 0\}}\leq 1$ a.e. $[\mu_w]$, we have $\efw\big(\chi_{\{w\neq0\}}\big)\circ\phi^{-1}\leq 1$ a.e. $[\mu]$. Hence $\efw\big( (\cfw f)_w \big)\circ \phi^{-1}\in L^2(\mu)$. Since $f$ was arbitrary, we get $\dz{\cfw}\subseteq \dz{\tilde C \cfw}$. Moreover, for any $f\in\dz{\cfw}$ we have
\begin{align*}
    \tilde C\cfw f=f \cdot \efw\big(\chi_{\{w\neq0\}}\big)\circ\phi^{-1}.
\end{align*}
We note that 
\begin{align}\label{valencia}
\text{$\chi_{\{\hfw>0\}}=\efw\big(\chi_{\{w\neq0\}}\big)\circ\phi^{-1}\quad$ a.e. $[\mu]$.}    
\end{align}
This can be deduced in the following way. First, by definition, we observe that \eqref{valencia} is equivalent to
\begin{align}\label{valencia+}
\chi_{\{\hfw>0\}}\circ\phi=\efw\big(\chi_{\{w\neq0\}}\big)\quad \text{a.e. $[\mu]$}.    
\end{align}
Clearly, the left hand side in \eqref{valencia+} is $\phi^{-1}(\ascr)$-measurable, so proving \eqref{valencia+} amounts to showing that
\begin{align}\label{valencia++}
\int_{\phi^{-1}(\varDelta)}\chi_{\{w\neq 0\}}\D\mu_w=
\int_{\phi^{-1}(\varDelta)}\chi_{\{\hfw> 0\}}\circ\phi\D\mu_w,\quad \varDelta\in\ascr.
\end{align}
For any $\varDelta\in\ascr$, we have
\begin{align*}
\int_{\phi^{-1}(\varDelta)}\chi_{\{w\neq 0\}}\D\mu_w=
\int_{\phi^{-1}(\varDelta)}\D\mu_w=\mu_w\big(\phi^{-1}(\varDelta)\big)
\end{align*}
and
\begin{align*}
\int_{\phi^{-1}(\varDelta)}\chi_{\{\hfw> 0\}}\circ\phi\D\mu_w&=
\int_{X}\chi_{\varDelta\cap\{\hfw> 0\}}\circ\phi\D\mu_w
=\int_X \chi_{\varDelta\cap\{\hfw> 0\}} \hfw\D\mu\\
&=\int_X \chi_{\varDelta} \hfw\D\mu=\mu_w\big(\phi^{-1}(\varDelta)\big).
\end{align*}
Thus \eqref{valencia++} holds, and so \eqref{valencia+} holds too. Applying \eqref{valencia} and \cite[Lemma 11]{b-j-j-sW}, we get 
\begin{align}\label{dentysta}
    \tilde C\cfw f=f \cdot \chi_{\{\hfw>0\}}=P_{\jd{\cfw}^\perp}f.
\end{align}

Now, take $f\in \dz{\tilde C}$. Then $f$ and $\efw(f_w)\circ\phi^{-1}$ belong to $L^2(\mu)$. Moreover, we have
\begin{align*}
\int_X\big|\efw(f_w)\circ\phi^{-1}\big|^2\hfw\D\mu&=
\int_X\big|\efw(f_w)\circ\phi^{-1}\big|^2\circ\phi\D\mu_w\\
&=\int_X\big|\efw(f_w)\big|^2\D\mu_w<+\infty,
\end{align*}
where the last inequality holds due to the boundedness of the operator $f\mapsto \efw(f_w)$ (see \cite[Section 2.5]{b-j-j-sW}). By \cite[Proposition 8\,(i)]{b-j-j-sW}, the above (with $f$ being arbitrary) implies that $\dz{\tilde C}\subseteq \dz{\cfw \tilde C}$. We also have
\begin{align*}
\cfw\tilde C f= w\efw(f_w).
\end{align*}
Now we write $f=f^++f^-$ with $f^+:=f-w\efw(f_w)$ and $f^-:=w\efw(f_w)$. Then
\begin{align*}
f^+_w=\chi_{\{w\neq0\}}\frac{f-w\efw(f_w)}{w}=f_w-\chi_{\{w\neq0\}}\efw(f_w)=f_w-\efw(f_w)\quad \text{a.e. $[\mu_w]$}.
\end{align*}
Since $\efw$ is an orthogonal projection (on a closed subspace of $L^2(\mu_w)$) we see that
\begin{align*}
\efw\big(f^+_w\big)=\efw\big(f_w-\efw(f_w)\big)=0.
\end{align*}
Applying \cite[Proposition 17\,(iii)]{b-j-j-sW} we get $f^+\in \jd{\cfw^*}$. On the other hand, we see that\allowdisplaybreaks
\begin{align*}
\is{f^-}{g}&=\int_Xw\efw(f_w)\bar{g}\D\mu
=\int_Xw\efw(f_w)\chi_{\{w\neq0\}}\bar{w}\frac{\bar{g}}{\bar w}\D\mu\\
&=\int_X\efw(f_w)\overline{g_w}\D\mu_w=\int_X f_w\overline{\efw(g_w)}\D\mu_w=0,\quad g\in\jd{\cfw^*},
\end{align*}
which implies that $f^-\perp\jd{\cfw^*}$. Employing $L^2(\mu)=\jd{\cfw^*}\oplus\overline{\ob{\cfw}}$, we see that
\begin{align}\label{dentysta+}
\cfw\tilde C f= w\efw(f_w)=P_{\overline{\ob{\cfw}}}f.
\end{align}

Concluding the proof, we note that given $f\in L^2(\mu)$ we have 
\begin{align*}
f\in\jd{\tilde C} &\iff \efw(f_w)\circ\phi^{-1}=0\quad \text{a.e. $[\mu]$}\\
&\iff \efw(f_w)=0\quad \text{a.e. $[\mu_w]$}\\
&\iff f\perp\ob{\cfw}.
\end{align*}
This together with \eqref{dentysta} and \eqref{dentysta+} yield $\tilde C=\mpcfw$.
\end{proof}
\begin{rem}\label{tolek}
Let $\varDelta\in\ascr$ be a set of finite $\mu$ measure such that $\frac{1}{\hfw\circ\phi}<K$ a.e. $[\mu_w]$ on $\varDelta$ for some $K>0$. Set $f:=\chi_\varDelta$. Then
\begin{align*}
\int_X \big|\efw(f_w)\circ\phi^{-1}\big|^2\D\mu&=
\int_{\{\hfw\neq0\}} \frac{1}{\hfw}\big|\efw(f_w)\circ\phi^{-1}\big|^2\hfw\D\mu\\
&=\int_X\frac{1}{\hfw\circ\phi} \big|\efw(f_w)\big|^2\D\mu_w=\int_X \big|\efw\big(\frac{f_w}{\sqrt{\hfw\circ\phi}}\big)\big|^2\D\mu_w\\
&\leq \int_X \big|\frac{f_w}{\sqrt{\hfw\circ\phi}}\big|^2\D\mu_w
=\int_{\{w\neq0\}\cap\varDelta} \frac{1}{\hfw\circ\phi}\D\mu\\
&\leq K\mu(\varDelta).
\end{align*}
This implies that $f\in\dz{\mpcfw}$. Moreover, since $\hfw\circ\phi>0$ a.e. $[\mu_w]$ and $\mu$ is $\sigma$-finite, the collection of such $f$'s (with $K$ depending on $f$) is linearly dense in $L^2(\mu)$.
\end{rem}

It is well known that the adjoint of the reciprocal of an operator $T$ is equal to the reciprocal of $T^*$. Hence we get the following.
\begin{pro}\label{ruczaj}
Assume $\cfw$ is densely defined. Then the following conditions hold:
\begin{enumerate}
\item[(i)] $\dz{(\cfw^*)^\dag}=\{f\in L^2(\mu) \colon \frac{w}{\hfw\circ\phi} (f\circ \phi)\in L^2(\mu)\}$,
\item[(ii)] $(\cfw^*)^\dag f=\frac{w}{\hfw\circ\phi} (f\circ \phi)$ for $f\in\dz{(\mpcfw)^*}$,
\item[(iii)] $(\cfw^*)^\dag=M_{\hfw^{-1/2}\circ\phi}U$, where $U$ is the unitary part of the polar decomposition $\cfw=U|\cfw|$.
\end{enumerate}
\end{pro}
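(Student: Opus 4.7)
The strategy is to use the identity $(\cfw^*)^\dag = (\mpcfw)^*$, the general property of reciprocals of closed operators noted just before the statement, and then compute $(\mpcfw)^*$ starting from the formula in Theorem~\ref{bazant}.

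To do so efficiently, I would factor $\mpcfw$ through $L^2(\mu_w)$ as $\mpcfw = J\,\efw\,V$, where $V\colon L^2(\mu)\to L^2(\mu_w)$, $Vf = f_w$, is bounded; $\efw$ is the bounded orthogonal projection in $L^2(\mu_w)$ onto its closed subspace of $\phi^{-1}(\ascr)$-measurable elements; and $J$ is the closed densely defined ``pullback'' operator sending a $\phi^{-1}(\ascr)$-measurable $\tilde f\in L^2(\mu_w)$ to $\tilde f\circ\phi^{-1}\in L^2(\mu)$. Theorem~\ref{bazant}(ii) gives this factorisation with $\dz{J\efw V}=\dz{\mpcfw}$. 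Because $\efw V$ is bounded and everywhere defined, the standard product rule for unbounded adjoints yields $(\mpcfw)^* = V^*\efw J^*$.

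A direct pairing computation gives $V^* g = wg$. For $J^*$, I would insert $\chi_{\{\hfw>0\}}\hfw^{-1}\cdot\hfw$ inside $\langle J\tilde f, g\rangle_{L^2(\mu)}$ and invoke~\eqref{l2-new} (a short Cauchy--Schwarz check guaranteeing that the integrand lies in $L^1$) to arrive at $J^* g = (g\circ\phi)/(\hfw\circ\phi)$ on the domain $\dz{J^*} = \{g\in L^2(\mu) : \tfrac{w(g\circ\phi)}{\hfw\circ\phi} \in L^2(\mu)\}$. Since $J^*g$ is automatically $\phi^{-1}(\ascr)$-measurable, $\efw$ leaves it fixed; applying $V^*$ then produces exactly the formula in~(ii) with the domain in~(i).

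For~(iii), \eqref{l2-new} gives $\|\cfw f\|^2 = \int |f|^2\hfw\,\D\mu$, whence $|\cfw| = M_{\sqrt{\hfw}}$, and the polar identity $\cfw = U|\cfw|$ forces $Ug = w(g\circ\phi)/\sqrt{\hfw\circ\phi}$ on the initial space $\jd{\cfw}^\perp = \overline{\ob{|\cfw|}}$ of $U$ (extended by zero elsewhere); multiplying by $(\hfw\circ\phi)^{-1/2}$ reproduces the formula of~(ii). The main technical obstacle will be the careful domain bookkeeping in the factorisation $\mpcfw = J\efw V$ and the rigorous justification of $(J\efw V)^* = V^*\efw J^*$; once the boundedness of $\efw V$ is invoked to reduce the problem, the remaining content is the change-of-variables computation identifying $J^*$.
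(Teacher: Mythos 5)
Your overall strategy (compute $(\mpcfw)^*$ and use $(\cfw^*)^\dag=(\mpcfw)^*$) agrees with the paper's, but your route through the factorisation $\mpcfw=J\,\efw V$ is genuinely different from the paper's direct evaluation of the pairing $\is{f}{\mpcfw g}$ followed by a test-function argument. As written, however, the factorisation route has a gap at its central step. The ``standard product rule'' you invoke, with the bounded everywhere-defined factor $\efw V$ applied \emph{first}, yields only the inclusion $(J\,\efw V)^*\supseteq(\efw V)^*J^*=V^*\efw J^*$; the equality $(AB)^*=B^*A^*$ is automatic only when the bounded factor is the \emph{outer} one, i.e.\ for $(BA)^*=A^*B^*$. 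With the bounded factor inside and not surjective the inclusion can be strict (take $B=0$: then $(AB)^*$ is everywhere defined while $B^*A^*$ is defined only on $\dz{A^*}$). So your argument delivers only the easy halves of (i) and (ii), and the hard inclusion $\dz{(\mpcfw)^*}\subseteq\{f\colon \frac{w}{\hfw\circ\phi}(f\circ\phi)\in L^2(\mu)\}$ --- exactly where the paper spends its effort, via the test functions of Remark~\ref{tolek} and \cite[Lemma 2]{b-j-j-sW} --- is not addressed. The gap is repairable: $V$ is a coisometry ($V^*g=wg$, $VV^*=I_{L^2(\mu_w)}$), so $\efw V$ maps $L^2(\mu)$ \emph{onto} the closed subspace $\overline{\ob{\efw}}$ containing $\dz{J}$, with isometric right inverse $V^*|_{\overline{\ob{\efw}}}$; for a bounded surjection the adjoint product rule does hold with equality. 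You must state and use this surjectivity explicitly.

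A second, related issue: you assert the domain identity $\dz{J^*}=\{g\colon \frac{w(g\circ\phi)}{\hfw\circ\phi}\in L^2(\mu)\}$ as an outcome of the change-of-variables computation, but that computation again gives only the inclusion ``$\supseteq$''. The reverse inclusion requires exhibiting sufficiently many test functions in $\dz{J}$ --- essentially the content of Remark~\ref{tolek} --- together with an identification lemma such as \cite[Lemma 2]{b-j-j-sW}; so the test-function step has not been eliminated, only relocated. Note also that $J$ is densely defined only as an operator out of $\overline{\ob{\efw}}$, not out of all of $L^2(\mu_w)$, so $J^*$ must be set up relative to that subspace. Part (iii) is sound: you re-derive $U=C_{\phi,\,w/\sqrt{\hfw\circ\phi}}$ from $|\cfw|=M_{\sqrt{\hfw}}$ instead of citing \cite[Theorem 18]{b-j-j-sW} as the paper does, and the remaining step is the same domain comparison the paper carries out.
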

\begin{proof}
Since $(\mpcfw)^*=(\cfw^*)^\dag$ it suffices to compute the adjoint of $\mpcfw$. 

Take $f\in L^2(\mu)$. We have
\begin{align}\label{lucek}
\is{f}{\mpcfw g}&=\int_X f\, \overline{\efw(g_w)\circ\phi^{-1}}\D\mu=\int_X \hfw^{-1} f\, \overline{\efw(g_w)\circ\phi^{-1}} \,\hfw\D\mu\notag\\
&=\int_X \frac{f\circ\phi}{\hfw\circ\phi} \overline{\efw(g_w)}\D\mu_w=\int_X \frac{f\circ\phi}{\hfw\circ\phi} \overline{g_w}\D\mu_w\\
&=\int_X w\frac{f\circ\phi}{\hfw\circ\phi}\,\overline{g}\D\mu,\quad g\in\dz{\mpcfw}.\notag
\end{align}
In view of the above, $\escr:=\{f\in L^2(\mu) \colon w \frac{f\circ \phi}{\hfw\circ\phi}\in L^2(\mu)\}\subseteq\dz{(\mpcfw)^*}$ and the equality in (ii) holds for every $f\in\escr$. 

Consider now $f\in \dz{(\mpcfw)^*}$ and set $\eta_f:=w \frac{f\circ \phi}{\hfw\circ\phi}$. Clearly, by \eqref{lucek}, $g\eta_f\in L^1(\mu)$ for every $g\in\dz{\mpcfw}$. Considering $g=\chi_\varDelta$, where $\varDelta\in\ascr$ is as in Remark \ref{tolek}, we see that $\int_X \eta_f\, g\D\mu=\int_X (\mpcfw)^*f\, g\D\mu$. Therefore we can deduce from the moreover part of Remark \ref{tolek} and \cite[Lemma 2]{b-j-j-sW}, $\eta_f=(\mpcfw)^*f$; in particular, $\eta_f\in L^2(\mu)$. Hence, $\dz{(\mpcfw)^*}\subseteq \escr$. Using our previous argument we get that $\dz{(\mpcfw)^*}= \escr$ and the equality in (ii) holds for every $f\in\escr$. Thus (i) and (ii) are satisfied.

It is known (see \cite[Theorem 18]{b-j-j-sW}) that the unitary part $U$ in the polar decomposition of $\cfw$ equals to $C_{\phi,\tilde w}$, where $\tilde w=\frac{w}{\sqrt{\hfw\circ\phi}}$ a.e. $[\mu]$. In view of (i) and (ii), $\dz{M_{\hfw^{-1/2}}U}\subseteq \dz{(\mpcfw)^*}$ and $M_{\hfw^{-1/2}}Uf=(\mpcfw)^*f$ for every $f\in \dz{M_{\hfw^{-1/2}}U}\subseteq \dz{(\mpcfw)^*}$. On the other hand, if $f\in \dz{(\mpcfw)^*}$, then $f\in \dz{M_{\hfw^{-1/2}}U}$ (since $U$ is a bounded operator on $L^2(\mu)$ and $\hfw^{-1/2}Uf\in L^2(\mu)$, the latter meaning that $Uf\in \dz{M_{\hfw^{-1/2}}}$). Hence (iii) holds and the proof is complete.
\end{proof}
\begin{rem}\label{cieplo}
In view of Proposition \ref{ruczaj}, we have $(\cfw^*)^\dag=C_{\phi, \hat w}$, where $\hat w=\frac{w}{\hfw\circ\phi}$. 
\end{rem}
An important subclass of weighted composition operators it that of weighted composition operators over discrete measure spaces (it comes handy in particular when constructing examples). Hence the formula for the reciprocal for these operators is worth knowing.
\begin{exa}\label{hms}
Let $(X,\ascr,\mu)$ be a discrete measure space, i.e.: $\ascr=2^X$, $\card{\at{\mu}}\leqslant \aleph_0$, $\mu\big(X\setminus \mu(\at{\mu})\big)=0$ and $\mu(x)<\infty$ for all $x\in X$, where $\at{\mu}=\{x\in X\colon \mu(x)>0\}$. Then for any $\phi\colon X\to X$ and $w\colon X\to \cbb$, $\cfw$ is well-defined and (see \cite[Proposition 79]{b-j-j-sW})
\begin{align}\label{hpr}
\hfw(x)=\frac{\mu_w(\phi^{-1}(\{x\}))}{\mu(x)},\quad x\in\at{\mu}.
\end{align}
Moreover, for every $f\colon X\to\overline{\rbb}_{+}$
\begin{align}\label{hpr+}
\efw(f)=\frac{\int_{\phi^{-1}(\{x\})}f\D\mu_w}{\mu_w(\phi^{-1}(\{x\}))} \text{ on $\phi^{-1}(\{x\})$, $x\in\Omega:=\at{\mu_w\circ\phi^{-1}}$}.
\end{align}
This implies that
\begin{align*}
\big(\mpcfw f\big)(x)=\frac{1}{\mu_w(\phi^{-1}(\{x\}))} \int_{\phi^{-1}(\{x\})}f\bar w\D\mu,\quad x\in\at{\mu_w\circ\phi^{-1}},
\end{align*}
for all $f\in L^2(\mu)$ such that
\begin{align*}
\sum_{x\in\at{\mu_w\circ\phi^{-1}}} \frac{\mu(\phi^{-1}(\{x\}))}{\big(\mu_w(\phi^{-1}(\{x\}))\big)^2} \bigg(\int_{\phi^{-1}(\{x\})}f\bar w\D\mu\bigg)^2<\infty.
\end{align*}
On the other hand, we have
\begin{align*}
\big((\cfw^*)^\dag f\big)(x)=\frac{w(x)\mu(\phi(x))}{\mu_w(\phi^{-1}(\{\phi(x)\}))} f(\phi(x)),\quad x\in\phi^{-1}(\at{\mu}),
\end{align*}
for all for $f\in L^2(\mu)$ such that
\begin{align*}
    \sum_{x\in\at{\mu}} |f(x)|^2\mu_w(\phi^{-1}(\{x\}))<\infty.
\end{align*}
\end{exa}
Composition operators are another important examples of weighted composition operators. For them, and their adjoints, the reciprocals take simplified forms.
\begin{cor}\label{wieden}
Let $C_\phi$ be densely defined in $L^2(\mu)$. Then the following conditions are satisfied:
\begin{enumerate}
\item[(i)] $\dz{C_\phi}=\{f\in L^2(\mu)\colon \esf_\phi(f)\circ\phi^{-1}\in L^2(\mu)\}$ and $C_\phi^\dag f=\esf_\phi(f)\circ\phi^{-1}\in L^2(\mu)\}$ for $f\in \dz{C_\phi}$,
\item[(ii)] $(C_\phi^*)^\dag =C_{\phi,\hsf_\phi^{-1}\circ\phi}$.
\end{enumerate}
\end{cor}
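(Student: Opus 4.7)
The plan is to derive Corollary \ref{wieden} as a direct specialization of Theorem \ref{bazant} and Proposition \ref{ruczaj} to the case $w={\mathbf 1}$, making use of the simplified notation $C_\phi=C_{\phi,{\mathbf 1}}$, $\hsf_\phi=\hsf_{\phi,{\mathbf 1}}$, and $\esf_\phi=\esf_{\phi,{\mathbf 1}}$ introduced in the Preliminaries.

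For part (i), I would first note that when $w={\mathbf 1}$ the set $\{w\neq 0\}$ equals $X$ (up to a null set), so for every $f\in L^2(\mu)$ one has $f_w=\chi_{\{w\neq 0\}}\frac{f}{w}=f$ a.e.\ $[\mu]$. Substituting into Theorem \ref{bazant} then yields
\[
\dz{C_\phi^\dag}=\{f\in L^2(\mu)\colon \esf_\phi(f)\circ\phi^{-1}\in L^2(\mu)\},\qquad C_\phi^\dag f=\esf_\phi(f)\circ\phi^{-1},
\]
which is exactly (i). (I would also flag the obvious typo $\dz{C_\phi}$ in the statement, which should read $\dz{C_\phi^\dag}$.)

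For part (ii), I would apply Proposition \ref{ruczaj}\,(i)--(ii) with $w={\mathbf 1}$ to obtain
\[
\dz{(C_\phi^*)^\dag}=\Big\{f\in L^2(\mu)\colon \tfrac{1}{\hsf_\phi\circ\phi}(f\circ\phi)\in L^2(\mu)\Big\},\qquad (C_\phi^*)^\dag f=\tfrac{1}{\hsf_\phi\circ\phi}(f\circ\phi).
\]
It then suffices to recognize the right-hand side as the weighted composition operator with symbol $\phi$ and weight $\hat w:=\hsf_\phi^{-1}\circ\phi$, i.e.\ to observe (as already recorded in Remark \ref{cieplo} with $w={\mathbf 1}$) that this domain and action coincide with $\dz{C_{\phi,\hat w}}$ and $C_{\phi,\hat w}f=\hat w\cdot(f\circ\phi)$. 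This gives (ii).

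There is no real obstacle here beyond bookkeeping: the only subtlety is making sure that the measurability/integrability conventions match when we replace $w$ by ${\mathbf 1}$ (so that $\{w\neq 0\}=X$ and the factor $\chi_{\{w\neq 0\}}$ drops out harmlessly), and then citing Remark \ref{cieplo} to rewrite the multiplication-by-$\hsf_\phi^{-1}\circ\phi$ composed with $C_\phi$ as the single weighted composition operator $C_{\phi,\hsf_\phi^{-1}\circ\phi}$.
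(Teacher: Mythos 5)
Your proposal is correct and matches the paper's (implicit) argument: the corollary is stated without a separate proof precisely because it is the specialization $w=\mathbf{1}$ of Theorem \ref{bazant} and Proposition \ref{ruczaj}, with $f_w=f$ and the identification of $\tfrac{1}{\hsf_\phi\circ\phi}(f\circ\phi)$ as $C_{\phi,\hsf_\phi^{-1}\circ\phi}f$ exactly as you describe. Your flagging of the typo $\dz{C_\phi}$ for $\dz{C_\phi^\dag}$ is also apt.
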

It is in general not true that well-definiteness of a weighted composition operator $\cfw$ implies well-definiteness of a composition operator $C_\phi$ (see e.g., \cite[Example 102]{b-j-j-sW}). However, if the latter operator is well-defined yet another formula for the reciprocal of the adjoint is available. It follows immediately from Proposition \ref{ruczaj} and \cite[Proposition 116]{b-j-j-sW}.
\begin{cor}
Let $C_\phi$ and $\cfw$ be densely defined in $L^2(\mu)$. Then the following hold:
\begin{align*}
\dz{(\cfw^*)^\dag}&=\{f\in L^2(\mu) \colon  \frac{w\,(f\circ \phi)}{(\hsf_\phi\circ\phi) \esf_\phi(|w|^2)}\in L^2(\mu)\},\\
(C_{\phi,w}^*)^\dag&= \frac{w\,(f\circ \phi)}{(\hsf_\phi\circ\phi) \esf_\phi(|w|^2)},\quad f\in\dz{(\cfw^*)^\dag}.
\end{align*}
\end{cor}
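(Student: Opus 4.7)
The plan is to combine Proposition~\ref{ruczaj} with the factorization of $\hfw$ supplied by \cite[Proposition 116]{b-j-j-sW}; when $C_\phi$ is well-defined, that result asserts
\begin{align*}
\hfw = \hsf_\phi \cdot \bigl(\esf_\phi(|w|^2)\circ\phi^{-1}\bigr) \quad \text{a.e. } [\mu].
\end{align*}
Granted this identity, no further analytical input is needed: the claim should drop out of a direct substitution into Proposition~\ref{ruczaj}.

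Concretely, I would first convert the displayed identity into one for $\hfw\circ\phi$. Composing with $\phi$ and invoking the defining property of the symbol $(\cdot)\circ\phi^{-1}$ recalled in the preliminaries, namely $\bigl(\esf_\phi(|w|^2)\circ\phi^{-1}\bigr)\circ\phi = \esf_\phi(|w|^2)$ a.e.\ $[\mu_w]$ (equivalently, $\mu$-a.e.\ on $\{w\neq 0\}$, where $\mu$ and $\mu_w$ are mutually absolutely continuous), I obtain
\begin{align*}
\hfw\circ\phi = (\hsf_\phi\circ\phi)\,\esf_\phi(|w|^2) \quad \text{$\mu$-a.e.\ on } \{w\neq 0\}.
\end{align*}

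Now I would feed this back into Proposition~\ref{ruczaj}(i)--(ii). On $\{w=0\}$ both expressions $\frac{w\,(f\circ\phi)}{\hfw\circ\phi}$ and $\frac{w\,(f\circ\phi)}{(\hsf_\phi\circ\phi)\,\esf_\phi(|w|^2)}$ vanish by the conventions $\tfrac{1}{0}=\infty$ and $0\cdot\infty=0$, so the equality of the two functions extends to $\mu$-a.e.\ $X$; reading off the domain condition and the formula for the action from Proposition~\ref{ruczaj} then delivers exactly the asserted description of $\dz{(\cfw^*)^\dag}$ and of $(\cfw^*)^\dag f$. I do not foresee any substantive obstacle; the only point that deserves an explicit line is that $\hsf_\phi\circ\phi$ and $\esf_\phi(|w|^2)$ are $\mu$-a.e.\ positive on $\{w\neq 0\}$, which follows from $\hfw\circ\phi>0$ a.e.\ $[\mu_w]$ via the factorization above, so that the divisions appearing in the statement make unambiguous sense.
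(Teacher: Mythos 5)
Your argument is exactly the one the paper intends: the corollary is stated there as following immediately from Proposition~\ref{ruczaj} together with the factorization $\hfw=\hsf_\phi\cdot\bigl(\esf_\phi(|w|^2)\circ\phi^{-1}\bigr)$ of \cite[Proposition 116]{b-j-j-sW}, which is precisely what you substitute. Your extra care about the set $\{w=0\}$ and the positivity of $(\hsf_\phi\circ\phi)\,\esf_\phi(|w|^2)$ on $\{w\neq0\}$ is correct and only makes explicit what the paper leaves implicit.
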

Obtaining analogous formulas for the reciprocal of $\cfw$ is possible under additional assumptions.
\begin{pro}\label{barcelona}
Let $C_\phi$ and $\cfw$ be densely defined in $L^2(\mu)$. Assume that $0<|w|<\beta$ a.e. $[\mu]$ for some $\beta\in(0,+\infty)$. Then the following hold:
\begin{align*}
\dz{\mpcfw}&=\{f\in L^2(\mu)\colon \frac{\esf_\phi(f\bar w)\circ\phi^{-1}}{\esf_\phi(|w|^2)\circ\phi^{-1}}\in L^2(\mu)\},\\
\mpcfw f&=\frac{\esf_\phi(f\bar w)\circ\phi^{-1}}{\esf_\phi(|w|^2)\circ\phi^{-1}},\quad f\in \dz{\mpcfw}.
\end{align*}
\end{pro}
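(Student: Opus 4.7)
The plan is to reduce the statement to Theorem~\ref{bazant} by rewriting $\efw$ in terms of the more classical conditional expectation $\esf_\phi$, using the change-of-measure identity already invoked in the previous corollary. Under the hypothesis $0<|w|<\beta$ a.e.\ $[\mu]$ we have $\{w\neq 0\}=X$ up to a $\mu$-null set, so $f_w = f/w$ a.e.\ $[\mu]$, and $|w|^2/w = \bar w$ a.e.\ $[\mu]$.

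First I would invoke \cite[Proposition 116]{b-j-j-sW} (the very result that fuels the preceding corollary), which, since $\mu_w$ has density $|w|^2$ with respect to $\mu$ and both $\esf_\phi$ and $\efw$ are conditional expectations onto $\phi^{-1}(\ascr)$ with respect to $\mu$ and $\mu_w$ respectively, yields the formula
\begin{align*}
\efw(g) = \frac{\esf_\phi(g|w|^2)}{\esf_\phi(|w|^2)} \quad \text{a.e.\ }[\mu_w]
\end{align*}
for admissible $g$. The boundedness $|w|<\beta$ ensures $\esf_\phi(|w|^2)$ is finite a.e.\ $[\mu]$, and the positivity $|w|>0$ together with $C_\phi$ being densely defined ensures $\esf_\phi(|w|^2)>0$ a.e.\ on $\{\hsf_\phi>0\}$, so the quotient is well-defined where it matters. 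Applying the formula with $g=f_w=f/w$ and using $|w|^2/w = \bar w$ gives
\begin{align*}
\efw(f_w) = \frac{\esf_\phi(f\bar w)}{\esf_\phi(|w|^2)} \quad \text{a.e.\ }[\mu_w].
\end{align*}

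Next I would compose with $\phi^{-1}$. Since both $\esf_\phi(f\bar w)$ and $\esf_\phi(|w|^2)$ are $\phi^{-1}(\ascr)$-measurable, the formal expression $\cdot\circ\phi^{-1}$ commutes with the quotient (as follows from the uniqueness clause in the definition recalled in the preliminaries and the fact that $\esf_\phi(|w|^2)\circ\phi^{-1}>0$ a.e.\ on $\{\hfw>0\}$). Thus
\begin{align*}
\efw(f_w)\circ\phi^{-1} = \frac{\esf_\phi(f\bar w)\circ\phi^{-1}}{\esf_\phi(|w|^2)\circ\phi^{-1}} \quad \text{a.e.\ }[\mu].
\end{align*}
Plugging this into Theorem~\ref{bazant}(ii) gives the action formula, and matching the $L^2(\mu)$-membership condition in Theorem~\ref{bazant}(i) with the same identity gives the domain.

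The main technical obstacle I anticipate is the bookkeeping surrounding null sets and the interpretation of the quotient on $\{\hfw=0\}$: one must check that the replacement of $\efw(f_w)$ by the ratio is valid exactly on the set where $\efw(f_w)\circ\phi^{-1}$ is non-trivially defined, and that the boundary sets (where $\esf_\phi(|w|^2)$ vanishes or is infinite) contribute nothing. The two-sided bound $0<|w|<\beta$ is precisely what makes these pathological sets negligible, so the hypothesis is used in an essential, if cosmetic, way.
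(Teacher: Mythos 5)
Your proposal is correct and takes essentially the same route as the paper: both reduce to Theorem~\ref{bazant} by identifying $\efw(f_w)\circ\phi^{-1}$ with $\esf_\phi(f\bar w)\circ\phi^{-1}\big/\esf_\phi(|w|^2)\circ\phi^{-1}$, the paper doing so by verifying $\esf_\phi(f\bar w)\circ\phi^{-1}\hsf_\phi=\efw(f_w)\circ\phi^{-1}\hfw$ against indicator functions and then dividing by $\hfw=\esf_\phi(|w|^2)\circ\phi^{-1}\,\hsf_\phi$, which is exactly your change-of-measure formula performed after composing with $\phi^{-1}$. The only nitpick is that \cite[Proposition 116]{b-j-j-sW} supplies the identity $\hfw=\esf_\phi(|w|^2)\circ\phi^{-1}\,\hsf_\phi$ (used together with $\{\hfw>0\}=\{\hsf_\phi>0\}$ from \cite[Proposition 121]{b-j-j-sW}) rather than the Bayes-type formula you attribute to it, but the latter follows from the same indicator-testing computation, so nothing essential is missing.
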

\begin{proof}
Clearly, our assumptions yield $\{f\in L^2(\mu)\}\subseteq\{f\in L^2(\mu_w)\}$. Thus for a fixed $f\in L^2(\mu)$, $\esf_\phi(f\bar w)\circ \phi^{-1}$ makes sense. Moreover, we have
\begin{align}\label{rudy}
\esf_\phi(f \bar w) \circ \phi^{-1} \hsf_\phi=\efw (f_w)\circ\phi^{-1}\hfw\quad \text{a.e. $[\mu]$}.
\end{align}
This follows from
\begin{align*}
\int_\varDelta \efw(f_w)\circ\phi^{-1}\hfw\D\mu&=\int_X\chi_{\varDelta}\circ\phi\, \efw (f_w)\D\mu_w
=\int_X (\chi_\varDelta\circ\phi)\, f_w\D\mu_w\\
&=\int_X (\chi_\varDelta\circ\phi)\, f \bar w\D\mu
=\int_X (\chi_\varDelta\circ\phi)\, \esf_\phi(f \bar w)\D\mu\\
&=\int_\varDelta \esf_\phi(f \bar w)\circ\phi^{-1}\hsf_\phi\D\mu,\quad \varDelta\in\ascr.
\end{align*}
Since $\{\hfw>0\}=\{\hsf_\phi>0\}$ a.e. $[\mu]$ (see \cite[Proposition 121]{b-j-j-sW}), \eqref{rudy} and \cite[Proposition 116]{b-j-j-sW} imply that
\begin{align*}
\efw (f_w)\circ\phi^{-1}=\frac{\esf_\phi(f \bar w) \circ \phi^{-1} \hsf_\phi}{\hfw}=\frac{\esf_\phi(f\bar w)\circ\phi^{-1}}{\esf_\phi(|w|^2)\circ\phi^{-1}}\quad \text{a.e. $[\mu]$.}
\end{align*}
Applying Theorem \ref{bazant} completes the proof.
\end{proof}
\begin{rem}
Concerning Proposition \ref{barcelona}, it should be pointed out that dropping the assumption $\alpha <|w|<\beta$ a.e. $[\mu]$ with $\alpha, \beta\in(0,+\infty)$ may cause that the expression $\esf_\phi(f \bar w)$ does no longer make sense, in which case the description of $\dz{\mpcfw}$ is not valid and the formula for $\mpcfw$ holds on a possibly smaller set (containing $L^2(\mu_w)$) than $\dz{\mpcfw}$.
\end{rem}
\section{Comparing $\mpcfw$, $(M_w C_\phi)^\dag$, and $C_\phi^\dag M_w^\dag$}
It is known that under some additional assumptions (see \cite[Chapter 7]{b-j-j-sW} for details) the equality $\cfw=M_w C_\phi$ holds and thus $\mpcfw=(M_wC_\phi)^\dag$ is satisfied. In view of \cite[Theorem 3.5]{hes-pjm-1961}, for closed densely defined operators $A$ and $B$ in $\hh$ we have $(BA)^\dag=A^\dag B^\dag$ whenever 
\begin{align}\label{bat}
\jd{A^*}=\jd{B}.    
\end{align}
Hence, by \cite[Proposition 17]{b-j-j-sW}, we see that:
\begin{cor}\label{ssrk}
Let $C_\phi$ be densely defined and $\{f\in L^2(\mu)\colon \esf_\phi(f)=0\text{ a.e. }[\mu]\}=\jd{M_w}$. Then
\begin{align}\label{tolling}
(M_wC_\phi)^\dag=C_\phi^\dag M_w^\dag    
\end{align}
In particular, if the weight function $w$ is non-zero a.e $[\mu]$, then $\overline{\ob{C_\phi}}=L^2(\mu)$ is sufficient for the equality \eqref{tolling} to hold.
\end{cor}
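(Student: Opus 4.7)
The plan is to apply Hestenes's theorem \cite[Theorem 3.5]{hes-pjm-1961} (cited immediately above the corollary) with $A:=C_\phi$ and $B:=M_w$. Both operators are closed; $M_w$ is automatically densely defined in $L^2(\mu)$ and $C_\phi$ is densely defined by hypothesis. Consequently, the only nontrivial item to verify for the first assertion is condition \eqref{bat}, namely $\jd{C_\phi^*}=\jd{M_w}$.

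To obtain \eqref{bat} I would invoke \cite[Proposition 17]{b-j-j-sW}, specialized to $w\equiv\mathbf{1}$, which identifies $\jd{C_\phi^*}$ with $\{f\in L^2(\mu)\colon \esf_\phi(f)=0\text{ a.e. }[\mu]\}$. By the very hypothesis of the corollary this set coincides with $\jd{M_w}$, so \eqref{bat} holds and Hestenes's theorem delivers the desired equality $(M_wC_\phi)^\dag=C_\phi^\dag M_w^\dag$ at once.

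For the ``in particular'' clause, notice that if $w\neq 0$ a.e.\ $[\mu]$ then $\jd{M_w}=\{0\}$. Since $C_\phi$ is closed and densely defined, $\jd{C_\phi^*}=\overline{\ob{C_\phi}}^{\perp}$; hence $\overline{\ob{C_\phi}}=L^2(\mu)$ forces $\jd{C_\phi^*}=\{0\}$ and, via the characterization recalled above, $\{f\in L^2(\mu)\colon \esf_\phi(f)=0\text{ a.e. }[\mu]\}=\{0\}=\jd{M_w}$. This reduces the sufficiency statement to the main part of the corollary.

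I do not foresee any substantial obstacle: the proof amounts to matching the known kernel description of $C_\phi^*$ against the hypothesis and then quoting Hestenes's theorem. The only point requiring a moment's care is making sure that \cite[Proposition 17]{b-j-j-sW}, stated there for weighted composition operators, indeed yields the claimed form of $\jd{C_\phi^*}$ when specialized to the unit weight, but this is immediate from the definitions.
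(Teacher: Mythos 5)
Your proof is correct and follows exactly the route the paper intends: the corollary is derived by applying Hestenes's theorem with $A=C_\phi$, $B=M_w$, identifying $\jd{C_\phi^*}$ with $\{f\in L^2(\mu)\colon \esf_\phi(f)=0\text{ a.e. }[\mu]\}$ via \cite[Proposition 17]{b-j-j-sW}, and matching this against the hypothesis; the ``in particular'' clause is handled the same way. No discrepancies to report.
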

The above mentioned sufficient condition is not necessary in general for there are weighted composition operators $\cfw$ that are product $M_wC_\phi$ of an injective $M_w$ and a densely defined $C_\phi$, the equality $\mpcfw=C_\phi^\dag M_w^\dag$ is satisfied but the range of $C_\phi$ is not dense (see Example \ref{hiszpa} below). Considering all the above we see that a problem of finding sufficient and necessary conditions for the equalities 
\begin{align*}
\mpcfw=(M_wC_\phi)^\dag=C_\phi^\dag M_w^\dag    
\end{align*}
to hold arises naturally. We will tackle it in this section.
\begin{exa}\label{hiszpa}
Let $X=\zbb_+\cup \{(-k,1)\colon k\in\nbb\}\cup \{(-k,2)\colon k\in\nbb\}$, $\ascr=2^X$ and $\mu$ be the counting measure on $\ascr$, i.e., $\mu(\{x\})=1$ for every $x\in X$. Let $w\colon X\to\cbb$ satisfy $|w((-1,1))|^2=|w((-1,2))|^2$ and $|w|>\alpha$ a.e. $[\mu]$ for some positive $\alpha\in\rbb$. Finally, let $\phi\colon X\to X$ be given by $\phi(k)=k+1$ for $k\in\zbb_+$, $\phi((-1,1))=\phi((-1,2))=0$, and $\phi(-k,l)=(-k+1,l)$ for $k\in\{2, 3, \ldots\}$ and $l\in\{1,2\}$. In view of \cite[Proposition 111]{b-j-j-sW}, $\cfw=M_w C_\phi$. Consider now any $f\in L^2(\mu)$ such that $f((-1,1))=-f((-1,2))\neq 0$ and $f(x)=0$ for $x\notin\{(-1,1),(-1,2)\}$. Then $\esf_\phi(f)=0$ (see Example \ref{hms}) and thus $f\in \jd{C_\phi^*}$. In particular, the range of $C_\phi$ is not dense in $L^2(\mu)$. To show that $\mpcfw=C_\phi^\dag M_w^\dag$ is satisfied we first observe that $|w|^2=\esf_{\phi}(|w|^2)$ which follows easily from \eqref{hpr+}. Also, by \eqref{hpr} and \cite[Proposition 111]{b-j-j-sW} we have
\begin{align*}
\hsf_{\phi,\hat w}(x)=\sum_{y\in \phi^{-1}(\{x\})}\frac{|w(y)|^2}{(\hfw\circ \phi) (y)}=\sum_{y\in \phi^{-1}(\{x\})}\frac{1}{|(\hsf_\phi\circ \phi) (y)\, w(y)|^2},\quad x\in X,
\end{align*}
and
\begin{align*}
\hsf_{\phi,\hat {\mathbf{1}}}(x)=\sum_{y\in \phi^{-1}(\{x\})}\frac{1}{|(\hsf_\phi\circ \phi) (y)|^2},\quad x\in X
\end{align*}
(see Corollary \ref{konopiste} for the required details). This implies that $\hsf_{\phi,\hat w}\leqslant \frac{1}{\alpha^2} \hsf_{\phi,\hat {\mathbf{1}}}$ and proves that $\mpcfw=C_\phi^\dag M_w^\dag$ (see Corollary \ref{konopiste}).
\end{exa}
The next two examples (Example \ref{hiszpa+} and Example \ref{hiszpa-}) are based on  \cite[Example 143]{b-j-j-sW}.
\begin{exa}\label{hiszpa+}
Let $(X,\ascr)=(\zbb, 2^\zbb)$ and $\mu$ be a a counting measure on $\ascr$. Define $\phi\colon X\to X$ and $w\colon X\to\cbb$ as follows:
\begin{align*}
\phi(n)=\left\{
    \begin{tabular}{cc}
    $n-1$&\text{if $n\leq0$},  \\
    $0$&\text{if $n>1$},
    \end{tabular}
\right.\quad 
w(n)=\left\{
    \begin{tabular}{cc}
    $\frac{1}{n}$&\text{if $n\neq0$},  \\
    $1$&\text{if $n=0$}.
    \end{tabular}
\right.
\end{align*}
Then $\cfw$ is densely defined in $L^2(\mu)$. On the other hand, $C_\phi$ is not densely defined (it is well-defined). Moreover $M_wC_\phi$ is closed and $M_wC_\phi\neq\cfw$. Thus, even though both $C_\phi^\dag$ and $(M_wC_\phi)^\dag$ do exist, Corollary \ref{wieden} is not applicable.
\end{exa}
An example of a pair of operators $M_w$ and $C_\phi$ giving a non-closed $M_wC_\phi$ is given below.
\begin{exa}\label{hiszpa-}
Let $X, \ascr, \phi$ and $w$ be as in Example \ref{hiszpa+}. Let $\mu$ be any measure on $\ascr$ such that $0<\mu(\{n\})\leq1$ for every $n\in\zbb$, $\sum_{k=1}^\infty \mu(\{k\})<\infty$ and $\lim\limits_{n\to-\infty}\frac{\mu(\{k\})}{\mu(\{k+1\})}=0$. Then the operators $\cfw$, $C_\phi$, and $M_wC_\phi$ are densely defined. Moreover, $M_wC_\phi$ is not closed and $M_wC_\phi\neq \cfw$. Clearly, $\mpcfw$ and $C_\phi^\dag$ exist. In view of Proposition \ref{ftch}, $(M_wC_\phi)^\dag$ exist as well (alternatively to using Proposition \ref{ftch}, one can simply observe that $\jd{M_w C_\phi}=\jd{C_\phi}=\overline{\jd{C_\phi}}$, which implies that $\dz{M_wC_\phi}$ can be decomposed as per requirement).
\end{exa}
Sometimes the reciprocals of two operators $A$ and $B$ satisfying $A\subseteq B$ are nicely related.
\begin{lem}\label{brintelix}
Suppose that $A\subseteq B$, the reciprocals $A^\dag$ and $B^\dag$ exist, and the following condition
\begin{align}\label{annaboda}
\dz{A}\cap \jd{A}^\perp \subseteq \jd{B}^\perp
\end{align}
is satisfied. Then $A^\dag=B^\dag|_{\ob{A}}\oplus 0|_{{\ob{A}^\perp}}$.
\end{lem}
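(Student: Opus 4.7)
The plan is to set $S := B^\dag|_{\ob{A}} \oplus 0|_{\ob{A}^\perp}$ and verify that $S$ satisfies the three defining identities of the reciprocal of $A$ recalled at the beginning of Section~3; uniqueness of the reciprocal will then give $S = A^\dag$. Well-definedness is immediate: $A \subseteq B$ yields $\ob{A} \subseteq \ob{B} \subseteq \dz{B^\dag}$, so the restriction $B^\dag|_{\ob{A}}$ is meaningful, and $\dz{S} = \ob{A} \oplus \ob{A}^\perp$ coincides with the prescribed domain $\dz{A^\dag}$.

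The crux of the argument is the identity
\begin{equation*}
P_{\jd{B}^\perp}\, g = P_{\jd{A}^\perp}\, g, \quad g \in \dz{A}.
\end{equation*}
To prove it I would decompose $g = g_1 + g_2$ according to $\dz{A} = \jd{A} \oplus \ns{A}$. From $A \subseteq B$ one has $\jd{A} \subseteq \jd{B}$, hence $g_1 \in \jd{B}$; from the hypothesis \eqref{annaboda} one has $g_2 \in \ns{A} \subseteq \jd{B}^\perp$. Uniqueness of the orthogonal decomposition with respect to $\jd{B}$ then yields the claimed equality. I expect this to be the main obstacle: it is precisely what hypothesis \eqref{annaboda} is designed to secure, and without it the values of $B^\dag$ on $\ob{A}$ need not lie in $\dz{A}$ at all, so the very formula in the conclusion would break down.

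With the identity in hand the three reciprocal relations for $S$ follow by short computations. For $g \in \dz{A}$ one has $S A g = B^\dag B g = P_{\jd{B}^\perp} g = P_{\jd{A}^\perp} g$. For $f = A g \in \ob{A}$ one obtains $S f = B^\dag f = P_{\jd{A}^\perp} g \in \ns{A} \subseteq \dz{A}$, whence $A S f = A g = f$; on $\ob{A}^\perp$ the operator $S$ vanishes, so together $A S f = P_{\overline{\ob{A}}} f$ on all of $\dz{S}$. Finally, $\ob{A}^\perp \subseteq \jd{S}$ by construction, and if $f = A g \in \ob{A}$ satisfies $S f = 0$ then $P_{\jd{A}^\perp} g = 0$, forcing $g \in \jd{A}$ and $f = A g = 0$; thus $\jd{S} = \ob{A}^\perp$, which completes the verification and identifies $S$ with $A^\dag$.
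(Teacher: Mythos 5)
Your proof is correct and takes essentially the same route as the paper's: both define the candidate operator $B^\dag|_{\ob{A}}\oplus 0|_{\ob{A}^\perp}$, use \eqref{annaboda} together with $\jd{A}\subseteq\jd{B}$ to get $P_{\jd{A}^\perp}g=P_{\jd{B}^\perp}g$ for $g\in\dz{A}$, and then verify the defining identities of the reciprocal of $A$. If anything, your checks that $B^\dag f\in\dz{A}$ for $f\in\ob{A}$ and that $\jd{S}=\ob{A}^\perp$ are slightly more explicit than the paper's, which treats these points as clear.
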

\begin{proof}
Due to \eqref{annaboda}, every $g\in\dz{A}$ can be written as $g=g_1+g_2$ with $g_1\in\jd{A}\subseteq \jd{B}$ and $g_2\in \dz{A}\cap \jd{A}^\perp \subseteq \dz{B}\cap \jd{B}^\perp$. In particular, we have 
\begin{align}\label{annaboda1}
P_{\jd{A}^\perp}g=P_{\jd{B}^\perp}g,\quad g\in\dz{A}.    
\end{align}
Define an operator $\hat A$ by the formula 
\begin{align*}
\hat A=B^\dag|_{\ob{A}}\oplus 0|_{{\ob{A}^\perp}}.   
\end{align*}
We show that $\hat A=A^\dag$. Clearly, 
\begin{align}\label{chybie0}
\text{$\dz{\hat A}=\dz{A^\dag}$ and $\jd{\hat A}=\jd{A^\dag}$.}    
\end{align}
For any $f\in \dz{\hat A}$ there are $f_1\in \ob{A}$ and $f_2\in \ob{A}^\perp$ such that $f=f_1+f_2$. Thus, for such $f$'s we have
\begin{align}\label{chybie1}
A\hat A f= A\hat A (f_1+f_2)=A\hat A f_1=B B^\dag f_1=P_{\overline{\ob{B}}}f_1=f_1=P_{\overline{\ob{A}}}f,\quad f\in\dz{\hat A}.
\end{align}
By \eqref{annaboda1} we also have
\begin{align}\label{chybie2}
\hat A A g=B^\dag B g=P_{\jd{B}^\perp}g=P_{\jd{A}^\perp}g,\quad g\in\dz{A}.
\end{align}
Combining \eqref{chybie0}-\eqref{chybie2} we get $\hat A= A^\dag$.
\end{proof}
\begin{rem}
Considering the proof of Lemma \ref{brintelix} we note that \eqref{chybie1} holds without assuming \eqref{annaboda}. Also, we note that if \eqref{annaboda} does not hold, \eqref{annaboda1} does not hold as well. Indeed, let $h\in\dz{A}\cap\jd{A}^\perp$ such that $P_{\jd{B}}h\neq 0$. Put $g_2^A:=h$ and $g_2^B:=h-P_{\jd{B}}h\in\dz{B}\cap\jd{B}^\perp$. Take any nonzero $g_1^2\in \jd{A}$ and set $g_1^B:=g_1^A+h\in \jd{B}$. Then for $g:=g_1^A+g_2^A=g_1^B+g_2^B$ we have $P_{\jd{A}^\perp}g=g_2^A\neq g_2^B=P_{\jd{B}^\perp}$.
\end{rem}
Lemma \ref{brintelix} turns out to be applicable in the context of weighted composition operators. It gives a formula for the reciprocal of $M_wC_\phi$ in terms of the reciprocal of $\cfw$.
\begin{pro}\label{ftch}
Let $C_\phi$ be well-defined. Then $(M_wC_\phi)^\dag=\mpcfw|_{\ob{M_wC_\phi}}\oplus 0|_{\ob{M_wC_\phi}^\perp}$.
\end{pro}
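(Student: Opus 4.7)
The natural route is to apply Lemma \ref{brintelix} with $A=M_wC_\phi$ and $B=\cfw$. The containment $M_wC_\phi\subseteq\cfw$ is immediate: any $f\in\dz{M_wC_\phi}$ satisfies $f\circ\phi\in L^2(\mu)$ and $w(f\circ\phi)\in L^2(\mu)$, where the second condition alone already places $f$ in $\dz{\cfw}$, and $M_wC_\phi f=w(f\circ\phi)=\cfw f$. Since $\cfw$ is closed, $\mpcfw$ exists. What remains is to verify (a) that $(M_wC_\phi)^\dag$ exists, i.e.\ that $\dz{M_wC_\phi}=\jd{M_wC_\phi}\oplus\ns{M_wC_\phi}$, and (b) condition \eqref{annaboda}.

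The crux of the argument is the following claim: for every $f\in\dz{M_wC_\phi}$, the truncation $f\chi_{\{\hfw=0\}}$ belongs to $\jd{M_wC_\phi}$. Indeed, $(f\chi_{\{\hfw=0\}})\circ\phi=(f\circ\phi)\chi_{\phi^{-1}(\{\hfw=0\})}$ is dominated in modulus by $f\circ\phi\in L^2(\mu)$, so $f\chi_{\{\hfw=0\}}\in\dz{C_\phi}$. Using \eqref{l2-new} we then compute
\begin{align*}
\int_X|w|^2\big|(f\chi_{\{\hfw=0\}})\circ\phi\big|^2\D\mu
=\int_X|f|^2\chi_{\{\hfw=0\}}\hfw\D\mu=0,
\end{align*}
so $w\cdot(f\chi_{\{\hfw=0\}})\circ\phi=0$ a.e.\ $[\mu]$, which establishes $f\chi_{\{\hfw=0\}}\in\jd{M_wC_\phi}$.

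Granted the claim, write $f=f\chi_{\{\hfw=0\}}+f\chi_{\{\hfw>0\}}$: the first summand lies in $\jd{M_wC_\phi}$, while the second lies in $\dz{M_wC_\phi}$ (as a difference of domain elements) and in $\jd{\cfw}^\perp$ (by \cite[Lemma 11]{b-j-j-sW}, $\jd{\cfw}$ consists of functions vanishing on $\{\hfw>0\}$). Since $\jd{M_wC_\phi}\subseteq\jd{\cfw}$, passing to orthogonal complements gives $\jd{\cfw}^\perp\subseteq\jd{M_wC_\phi}^\perp$, so the second summand lies in $\ns{M_wC_\phi}$, which settles (a). For (b), any $f\in\dz{M_wC_\phi}\cap\jd{M_wC_\phi}^\perp$ satisfies $\is{f}{f\chi_{\{\hfw=0\}}}=\int_X|f|^2\chi_{\{\hfw=0\}}\D\mu=0$, hence $f\in\jd{\cfw}^\perp$. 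Applying Lemma \ref{brintelix} then yields the asserted formula. The only non-routine step is the truncation claim above; once in hand, everything else is a direct bookkeeping exercise.
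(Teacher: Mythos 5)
Your proof is correct, and it follows the same overall strategy as the paper: reduce to Lemma \ref{brintelix} with $A=M_wC_\phi$, $B=\cfw$, and verify the two prerequisites (existence of $(M_wC_\phi)^\dag$ and condition \eqref{annaboda}) by splitting along the set $\{\hfw=0\}$. Where you differ is in how the splitting is justified. The paper quotes the explicit description $\dz{M_wC_\phi}=L^2((1+\hsf_\phi+\hfw)\D\mu)$ from \cite[Theorem 112]{b-j-j-sW} and reads off the kernel, its orthocomplement, and $\ns{M_wC_\phi}$ as weighted $L^2$-spaces cut down by $\chi_{\{\hfw=0\}}$ and $\chi_{\{\hfw>0\}}$. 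You instead prove directly, using only the change-of-variables identity \eqref{l2-new} and the description of $\jd{\cfw}$ from \cite[Lemma 11]{b-j-j-sW}, that multiplication by $\chi_{\{\hfw=0\}}$ maps $\dz{M_wC_\phi}$ into $\jd{M_wC_\phi}$; this single truncation claim then yields both the decomposition $\dz{M_wC_\phi}=\jd{M_wC_\phi}\oplus\ns{M_wC_\phi}$ and \eqref{annaboda} (the latter via the neat observation that $f\perp f\chi_{\{\hfw=0\}}$ forces $f$ to vanish on $\{\hfw=0\}$). Your route is more elementary and self-contained, avoiding the external structure theorem for $\dz{M_wC_\phi}$; the paper's route buys, as a by-product, the explicit formulas for $\jd{M_wC_\phi}^\perp$ and $\ns{M_wC_\phi}$ that are reused in the remark following the proposition. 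Both arguments are complete and correct.
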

\begin{proof}
First we show that the reciprocal of $M_wC_\phi$ exists. This amounts to showing that
\begin{align}\label{eldwa0}
\dz{M_wC_\phi}=\jd{M_wC_\phi}\oplus \big(\dz{M_wC_\phi}\cap\jd{M_wC_\phi}^\perp\big).
\end{align}
Since $\dz{M_wC_\phi}=L^2((1+\hsf_\phi+\hfw)\D\mu)$ (see \cite[Theorem 112]{b-j-j-sW}) and $\jd{\cfw}=\chi_{\{\hfw=0\}}L^2(\mu)$ (see \cite[Lemma 11]{b-j-j-sW}) we get
\begin{align}\label{eldwa1}
\jd{M_wC_\phi}=\chi_{\{\hfw=0\}}L^2((1+\hsf_\phi+\hfw)\D\mu).    
\end{align}
Hence $\overline{\jd{M_wC_\phi}}=\chi_{\{\hfw=0\}\cap\{\hsf_\phi<\infty\}}L^2(\mu)$ and thus $\jd{M_wC_\phi}^\perp=\chi_{\{\hfw>0\}\cup\{\hsf_\phi=\infty\}}L^2(\mu)$. Consequently, we get
\begin{align}\label{eldwa2}
\dz{M_wC_\phi}\cap\jd{M_wC_\phi}^\perp=\chi_{\{\hfw>0\}}L^2((1+\hsf_\phi+\hfw)\D\mu).
\end{align}
Combining \eqref{eldwa1}, \eqref{eldwa2} and $\dz{M_wC_\phi}=L^2((1+\hsf_\phi+\hfw)\D\mu)$ we get \eqref{eldwa0}.

In view of Lemma \ref{brintelix} it suffices now to check whether 
\begin{align}\label{suchylas}
\dz{M_wC_\phi}\cap\jd{M_wC_\phi}^\perp\subseteq \jd{\cfw}^\perp
\end{align}
holds. This clearly is true due to \eqref{eldwa2} and $\dz{\cfw}\cap\jd{\cfw}^\perp=\chi_{\{\hfw>0\}}L^2((1+\hfw)\D\mu).$
\end{proof}
\begin{rem}
If $M_wC_\phi$ is closed then $\hsf_\phi\leqslant c(1+\hfw)$ a.e. $[\mu]$ on $\{\hsf_\phi<\infty\}$ (see \cite[Theorem 112]{b-j-j-sW}). Thus we get $\dz{M_wC_\phi}=\chi_{\{\hsf_\phi<\infty\}}L^2((1+\hfw)\D\mu)$. Hence
\begin{align*}
\dz{M_wC_\phi}\cap\jd{M_wC_\phi}^\perp=\chi_{\{\hsf_\phi<\infty\}\cap\{\hfw>0\}}L^2((1+\hfw)\D\mu).
\end{align*}
\end{rem}
The following can be easily derived from Proposition \ref{ftch}.
\begin{lem}\label{balicka5}
Let $C_\phi$ be well-defined. Then $(M_w C_\phi)^\dag=\mpcfw$ if and only if $\ob{M_w C_\phi}^\perp = \ob{\cfw}^\perp$ and $\mpcfw f=0$ for every $f\in\ob{M_w C_\phi}^\perp$. 
\end{lem}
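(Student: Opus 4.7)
The plan is to derive the equivalence directly from Proposition~\ref{ftch}, which expresses $(M_wC_\phi)^\dag$ as the direct sum $\mpcfw|_{\ob{M_wC_\phi}}\oplus 0|_{\ob{M_wC_\phi}^\perp}$, and to identify precisely when this representation collapses to $\mpcfw$ itself.

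For the forward implication, suppose $(M_wC_\phi)^\dag=\mpcfw$. Equating kernels and using the general identity $\jd{T^\dag}=\ob{T}^\perp$ (built into the definition of a reciprocal) immediately gives $\ob{M_wC_\phi}^\perp=\ob{\cfw}^\perp$. Proposition~\ref{ftch} further tells us that $(M_wC_\phi)^\dag f=0$ for every $f\in\ob{M_wC_\phi}^\perp$, and the hypothesized equality transfers this to $\mpcfw$.

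For the reverse implication, assume both conditions. Then $\ob{M_wC_\phi}^\perp=\ob{\cfw}^\perp=\jd{\mpcfw}$, so the zero piece $0|_{\ob{M_wC_\phi}^\perp}$ in Proposition~\ref{ftch} is literally the restriction of $\mpcfw$ to this subspace. On the complementary summand $\ob{M_wC_\phi}\subseteq\ob{\cfw}$, Proposition~\ref{ftch} gives $(M_wC_\phi)^\dag|_{\ob{M_wC_\phi}}=\mpcfw|_{\ob{M_wC_\phi}}$, a restriction of $\mpcfw$. For $f=f_1+f_2\in\dz{(M_wC_\phi)^\dag}$ with $f_1\in\ob{M_wC_\phi}$ and $f_2\in\ob{M_wC_\phi}^\perp$, one then verifies $(M_wC_\phi)^\dag f=\mpcfw f_1=\mpcfw(f_1+f_2)=\mpcfw f$, which yields $(M_wC_\phi)^\dag\subseteq\mpcfw$.

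The main obstacle is the reverse inclusion of graphs, i.e., showing $\dz{\mpcfw}\subseteq\dz{(M_wC_\phi)^\dag}$. Given $g=g_1+g_2\in\dz{\mpcfw}$ with $g_1\in\ob{\cfw}$ and $g_2\in\ob{\cfw}^\perp=\ob{M_wC_\phi}^\perp$, I need to produce a decomposition $g=h_1+h_2$ with $h_1\in\ob{M_wC_\phi}$ (not merely in $\overline{\ob{M_wC_\phi}}$) and $h_2\in\ob{M_wC_\phi}^\perp$. By uniqueness of the orthogonal decomposition relative to $\overline{\ob{M_wC_\phi}}\oplus\ob{M_wC_\phi}^\perp=\hh$ together with $\overline{\ob{M_wC_\phi}}=\overline{\ob{\cfw}}$, this forces $h_2=g_2$ and $h_1=g_1$; so the existence of such a decomposition is exactly the assertion that $g_1\in\ob{M_wC_\phi}$, which is where the two conditions must act in concert.
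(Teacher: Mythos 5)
Your forward implication and the inclusion $(M_wC_\phi)^\dag\subseteq\mpcfw$ are correct, and your reduction of the remaining inclusion is also correct as far as it goes: since the hypotheses give $\overline{\ob{M_wC_\phi}}=\overline{\ob{\cfw}}$, any decomposition $g_1=h_1+h_2$ of a vector $g_1\in\ob{\cfw}$ with $h_1\in\ob{M_wC_\phi}$ and $h_2\in\ob{M_wC_\phi}^\perp$ forces $h_2\in\overline{\ob{\cfw}}\cap\ob{\cfw}^\perp=\{0\}$, so $\dz{\mpcfw}\subseteq\dz{(M_wC_\phi)^\dag}$ holds if and only if $\ob{\cfw}\subseteq\ob{M_wC_\phi}$, i.e.\ if and only if the two ranges (not merely their closures) coincide.

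But that is exactly where your argument stops. The closing sentence, that this ``is where the two conditions must act in concert,'' is not a proof: you never derive $\ob{\cfw}\subseteq\ob{M_wC_\phi}$ from the stated hypotheses, and it is not evident that they can deliver it. Observe that the second hypothesis is redundant given the first: once $\ob{M_wC_\phi}^\perp=\ob{\cfw}^\perp$, every $f\in\ob{M_wC_\phi}^\perp$ lies in $\jd{\mpcfw}=\ob{\cfw}^\perp$, so $\mpcfw f=0$ automatically. Hence the only real input is equality of the \emph{closures} of the two ranges, and equality of closures does not imply equality of the ranges themselves when these fail to be closed --- which $\ob{M_wC_\phi}$ and $\ob{\cfw}$ may well do, since $\dz{M_wC_\phi}=L^2((1+\hsf_\phi+\hfw)\D\mu)$ can be a proper (dense) subspace of $\dz{\cfw}=L^2((1+\hfw)\D\mu)$ and $M_wC_\phi$ is then a proper restriction of $\cfw$ with a strictly smaller range. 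So the decisive step of the backward implication is missing. To complete the proof you would need an argument, specific to these operators, showing that the stated conditions force $\ob{M_wC_\phi}=\ob{\cfw}$; absent that, the honest conclusion available from Proposition \ref{ftch} is the equivalence of $(M_wC_\phi)^\dag=\mpcfw$ with the range equality $\ob{M_wC_\phi}=\ob{\cfw}$, which is a strictly stronger condition than the one appearing on the right-hand side of the lemma.
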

In general, we do have only the inclusion $M_wC_\phi\subseteq \cfw$, thus $\ob{\cfw}^\perp\subseteq \ob{M_w C_\phi}^\perp$. Obviously, the equality 
\begin{align}\label{johndog1}
\ob{\cfw}^\perp= \ob{M_w C_\phi}^\perp    
\end{align}
holds if and only if $\overline{\ob{\cfw}}= \overline{\ob{M_w C_\phi}}$.
In case of both $C_\phi$ and $\cfw$ densely defined, this is equivalent to
\begin{align}\label{balicka4}
\jd{C_\phi^*M_w^*}=\jd{\cfw^*}
\end{align}
or, due to \cite[Proposition 17]{b-j-j-sW}, to
\begin{align*}
\{f\in L^2(\mu)\colon \efw(f_w)=0\}=\jd{(M_w C_\phi)^*},
\end{align*}
In the proof of the following theorem we show that \eqref{balicka4} actually holds.
\begin{thm}\label{bt}
Assume that $C_\phi$ and $\cfw$ are densely defined. Then $(M_w C_\phi)^\dag=\mpcfw$.
\end{thm}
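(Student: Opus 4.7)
The plan is to apply Lemma~\ref{balicka5}. Since $C_\phi$ is densely defined (hence well-defined), Proposition~\ref{ftch} yields the existence of $(M_wC_\phi)^\dag$. By the defining property of a reciprocal, $\mpcfw$ annihilates $\jd{\mpcfw}=\ob{\cfw}^\perp$, so once the first condition of Lemma~\ref{balicka5}, namely $\ob{M_wC_\phi}^\perp=\ob{\cfw}^\perp$, is in place, the second is immediate. The whole argument therefore reduces to establishing the range-closure equality
\[
\overline{\ob{M_wC_\phi}}=\overline{\ob{\cfw}}.
\]
The inclusion $\overline{\ob{M_wC_\phi}}\subseteq\overline{\ob{\cfw}}$ is automatic from $M_wC_\phi\subseteq\cfw$.

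For the reverse inclusion I would show that every $\cfw g$ with $g\in\dz{\cfw}$ is the $L^2(\mu)$-limit of a sequence from $\ob{M_wC_\phi}$, using a truncation that simultaneously controls $|g|$, the Radon--Nikodym derivative $\hsf_\phi$, and the ambient measure. Fix sets $B_n\in\ascr$ exhausting $X$ with $\mu(B_n)<\infty$, and set
\[
A_n:=\{|g|\leqslant n\}\cap\{\hsf_\phi\leqslant n\}\cap B_n,\qquad g_n:=g\,\chi_{A_n}.
\]
The estimate $\int|g_n|^2\hsf_\phi\D\mu\leqslant n^3\mu(B_n)<\infty$ gives $g_n\in\dz{C_\phi}$, while the pointwise bound $|g_n|\leqslant|g|$ together with $g\in\dz{\cfw}$ forces $g_n\in\dz{\cfw}$. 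Thus $g_n\in\dz{C_\phi}\cap\dz{\cfw}=\dz{M_wC_\phi}$, and $\cfw g_n=M_wC_\phi g_n\in\ob{M_wC_\phi}$.

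For the limit, I rely on the assumption that both $C_\phi$ and $\cfw$ are densely defined, which forces $\hsf_\phi<\infty$ and $\hfw<\infty$ $\mu$-a.e. Consequently $A_n\uparrow X$ $\mu$-a.e., whence $g_n\to g$ $\mu$-a.e., and therefore also $\mu_w$-a.e., because $\mu_w\ll\mu$. Invoking \eqref{l2-new} applied to $|g_n-g|^2$ and dominated convergence with majorant $4|g|^2\hfw\in L^1(\mu)$ yields
\[
\|\cfw g_n-\cfw g\|_{L^2(\mu)}^2=\int_X|g_n-g|^2\,\hfw\D\mu\xrightarrow[n\to\infty]{}0,
\]
so $\cfw g\in\overline{\ob{M_wC_\phi}}$. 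Lemma~\ref{balicka5} now finishes the proof.

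The main subtlety is that, without the cutoff $\hsf_\phi\leqslant n$, the truncated functions $g_n$ are not guaranteed to lie in $\dz{C_\phi}$; it is precisely the density hypothesis on $C_\phi$, through the a.e.\ finiteness of $\hsf_\phi$, that allows the exhaustion $A_n\uparrow X$ to hold modulo $\mu$-null sets, and the parallel finiteness of $\hfw$ (from density of $\cfw$) is what keeps the dominated-convergence step valid.
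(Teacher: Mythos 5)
Your proof is correct, and it takes a genuinely different route from the paper's. Both arguments funnel through Lemma~\ref{balicka5}, and your observation that the second condition there is automatic once $\ob{M_wC_\phi}^\perp=\ob{\cfw}^\perp$ is established (since $\jd{\mpcfw}=\ob{\cfw}^\perp$ by the definition of the reciprocal) is a genuine simplification. The paper obtains the range equality indirectly, through the adjoints: it proves $\jd{C_\phi^*M_w^*}=\jd{(M_wC_\phi)^*}\cap L^2(\mu_w)=\jd{\cfw^*}\cap L^2(\mu_w)$ by means of conditional-expectation identities (notably \eqref{rudy}), deduces $\jd{(M_wC_\phi)^*}=\jd{\cfw^*}$, and then separately checks that $\mpcfw$ vanishes on $\ob{M_wC_\phi}^\perp$. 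You instead prove $\ob{\cfw}\subseteq\overline{\ob{M_wC_\phi}}$ directly by truncation: the cutoff $A_n=\{|g|\Le n\}\cap\{\hsf_\phi\Le n\}\cap B_n$ places $g_n=g\chi_{A_n}$ in $\dz{C_\phi}\cap\dz{\cfw}=L^2((1+\hsf_\phi+\hfw)\D\mu)=\dz{M_wC_\phi}$, and the a.e.\ finiteness of $\hsf_\phi$ (equivalent to $C_\phi$ being densely defined) combined with dominated convergence against the majorant $4|g|^2\hfw\in L^1(\mu)$ gives $M_wC_\phi g_n=\cfw g_n\to\cfw g$ in $L^2(\mu)$. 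Your argument is more elementary --- pure measure theory, no conditional expectations --- whereas the paper's route has the side benefit of establishing the kernel identity \eqref{balicka4} explicitly, which the discussion preceding the theorem and the criterion \eqref{bat} are concerned with. All steps of your argument check out; the only cosmetic remark is that the a.e.\ finiteness of $\hfw$ is not actually needed for the dominated-convergence step, since the integrability of $|g|^2\hfw$ already follows from $g\in\dz{\cfw}$.
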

\begin{proof}
If $f\in \jd{C_\phi^*M_w^*}$, then either $f=\chi_{\{w=0\}}f$ or $f\in L^2(\mu_w)$ and $\esf_\phi(f\bar w)=0$ a.e. $[\mu]$. This means that
\begin{align*}
\is{f}{wg\circ\phi}=\int_Xf\overline {w g\circ\phi}\D\mu=\int_X\esf_\phi(f\bar{w}) \overline{g\circ\phi}\D\mu=0,\quad g\in \dz{M_wC_\phi}.
\end{align*}
Hence, ${\jd{C_\phi^*M_w^*}}\subseteq\ob{M_wC_\phi}^\perp\cap L^2(\mu_w)=\jd{(M_wC_\phi)^*}\cap L^2(\mu_w)$. On the other hand, if $f\in \ob{M_wC_\phi}^\perp\cap L^2(\mu_w)$, then
\begin{align*}
0=\is{f}{wg\circ\phi}=\int_x \esf_\phi(f\bar w) \overline{g\circ\phi}\D\mu=\int_X \esf_{\phi}(f\bar w)\circ\phi^{-1} \bar g \hsf_{\phi}\D\mu,\quad g\in\dz{M_wC_\phi}
\end{align*}
and this yields $\esf_\phi(f\bar w)\hsf_\phi=0$ a.e. $[\mu]$. Since $\esf_\phi(f\bar w)=\chi_{\{\hsf_\phi<0\}}\esf_\phi(f\bar w)$, we deduce that $\esf(f\bar w)=0$ a.e. $\mu$. Thus $\jd{(M_wC_\phi)^*}\cap L^2(\mu_w)\subseteq {\jd{C_\phi^*M_w^*}}$. Summing up, we proved that 
\begin{align}\label{balicka1}
{\jd{C_\phi^*M_w^*}}=\jd{(M_wC_\phi)^*}\cap L^2(\mu_w).  
\end{align}
We note that \eqref{rudy} is valid for $f\in L^2(\mu)\cap L^2(\mu_w)$ and so $\esf_{\phi}(f_w)=0$ a.e. $[\mu_w]$ implies $\efw(f_w)=0$ a.e. $[\mu_w]$ for such $f$'s. Hence, by \eqref{balicka1}, for any $f\in\jd{(M_wC_\phi)^*}\cap L^2(\mu_w)$, we have $\efw(f_w)=0$ a.e. $[\mu_w]$ and this means that $f\in \jd{\cfw^*}\cap L^2(\mu_w)$. Thus $\jd{(M_wC_\phi)^*}\cap L^2(\mu_w)\subseteq \jd{\cfw^*}\cap L^2(\mu_w)$. Clearly, the opposite inclusion holds and so we have 
\begin{align}\label{balicka2}
\jd{(M_wC_\phi)^*}\cap L^2(\mu_w)= \jd{\cfw^*}\cap L^2(\mu_w).
\end{align}
Combining \eqref{balicka1}, \eqref{balicka2} we see that $\jd{C_\phi^*M_w^*}=\jd{\cfw^*}\cap L^2(\mu_w)\subseteq \jd{\cfw^*}$. This together with $\jd{\cfw^*}\subseteq \jd{(M_wC_\phi)^*}$ gives the equality
\begin{align*}
\jd{C_\phi^*M_w^*}=\jd{\cfw^*}
\end{align*}
Also we see that $\mpcfw{f}=0$ for every $f\in\jd{(M_wC_\phi)^*}$ (by \eqref{rudy} again). Therefore, \eqref{johndog1} holds and $\mpcfw f=0$ for $f\in\ob{M_wC_\phi}^\perp$ is satisfied. Apllying Lemma \ref{balicka5} completes the proof.
\end{proof}
Below we focus on the equality $\mpcfw=C_\phi^\dag M_w^\dag$.
\begin{thm}\label{dt}
Let $\cfw$ and $C_\phi$ be densely defined. Consider the following conditions:
\begin{enumerate}
\item[(a$_1$)] $\mpcfw=C_\phi^\dag M_w^\dag$,
\item[(a$_2$)] $\mpcfw\subseteq C_\phi^\dag M_w^\dag$,
\item[(a$_3$)] $\mpcfw\supseteq C_\phi^\dag M_w^\dag$,
\item[(b$_1$)] for every $f\in \dz{\cfw^\dag}$, $f_w\in L^2(\mu)$ and $\efw(f_w)\circ\phi^{-1}=\esf_\phi(f_w)\circ\phi^{-1}$  a.e. $[\mu]$,
\item[(b$_2$)] for every $f\in L^2(1+|w|^{-2}\D\mu)$, $\efw(f_w)\circ\phi^{-1}=\esf_\phi(f_w)\circ\phi^{-1}$  a.e. $[\mu]$,
\item[(c)]$|w|^2=\esf_\phi(|w|^2)$ a.e. $[\mu]$ on $\{w\neq 0\}$.
\end{enumerate}
Then (a$_1$) $\Leftrightarrow$ (a$_2$) $\Leftrightarrow$ (b$_1$) $\Rightarrow$ (c) $\Rightarrow$ (b$_2$) $\Rightarrow$ (a$_3$). 
\end{thm}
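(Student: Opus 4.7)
The plan is to establish (a$_1$)$\Leftrightarrow$(a$_2$)$\Leftrightarrow$(b$_1$) directly from the explicit formulas for the three reciprocals, and then to walk the chain (b$_1$)$\Rightarrow$(c)$\Rightarrow$(b$_2$)$\Rightarrow$(a$_3$); since (a$_1$)$\Rightarrow$(a$_2$) is automatic, the chain supplies the reverse inclusion needed to upgrade (a$_2$) to (a$_1$).

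The explicit formulas are: the reciprocal of $M_w$ is multiplication by $\chi_{\{w\neq 0\}}/w$, so $M_w^\dag f=f_w$ on $\{f\in L^2(\mu)\colon f_w\in L^2(\mu)\}$; by Corollary \ref{wieden}, $C_\phi^\dag g=\esf_\phi(g)\circ\phi^{-1}$; and by Theorem \ref{bazant}, $\mpcfw f=\efw(f_w)\circ\phi^{-1}$. Composition gives $C_\phi^\dag M_w^\dag f=\esf_\phi(f_w)\circ\phi^{-1}$ on its natural domain. Comparing domains and values then makes (a$_2$)$\Leftrightarrow$(b$_1$) a one-line verification: (a$_2$)$\Rightarrow$(b$_1$) because $\dz{\mpcfw}\subseteq\dz{C_\phi^\dag M_w^\dag}$ forces $f_w\in L^2(\mu)$, and conversely (b$_1$) places each $f\in\dz{\mpcfw}$ inside $\dz{C_\phi^\dag M_w^\dag}$ with matching value.

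For (c)$\Rightarrow$(b$_2$) I would combine the identity $\efw(f_w)\circ\phi^{-1}\,\hfw=\esf_\phi(f\bar w)\circ\phi^{-1}\,\hsf_\phi$ (obtained as in the proof of Proposition \ref{barcelona}) with the factorization $\hfw=\esf_\phi(|w|^2)\circ\phi^{-1}\,\hsf_\phi$ from \cite[Proposition 116]{b-j-j-sW}. Under (c), since $f_w$ is supported on $\{w\neq 0\}$, I can rewrite $\esf_\phi(f\bar w)=\esf_\phi(f_w|w|^2)=\esf_\phi(|w|^2)\,\esf_\phi(f_w)$ by replacing $|w|^2$ with the $\phi^{-1}(\ascr)$-measurable $\esf_\phi(|w|^2)$ inside the conditional expectation and pulling it out. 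Cancelling $\esf_\phi(|w|^2)\circ\phi^{-1}\,\hsf_\phi$ against $\hfw$ on $\{\hfw>0\}$, and using the convention on $\{\hfw=0\}$, yields (b$_2$). Then (b$_2$)$\Rightarrow$(a$_3$) is immediate: any $f\in\dz{C_\phi^\dag M_w^\dag}$ has $f_w\in L^2(\mu)$, so $\tilde f:=f\chi_{\{w\neq 0\}}$ lies in $L^2((1+|w|^{-2})\D\mu)$ with $\tilde f_w=f_w$, and applying (b$_2$) to $\tilde f$ gives $\mpcfw f=C_\phi^\dag M_w^\dag f$.

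The main obstacle is (b$_1$)$\Rightarrow$(c). The key auxiliary identity is $\efw(g)\,\esf_\phi(|w|^2)=\esf_\phi(g\,|w|^2)$ a.e.\ $[\mu]$, obtained by computing $\int_F g\,|w|^2\,\D\mu=\int_F\efw(g)\,\D\mu_w=\int_F\efw(g)\,\esf_\phi(|w|^2)\,\D\mu$ for $F\in\phi^{-1}(\ascr)$. Combined with (b$_1$) it gives $\esf_\phi(\chi_\varDelta|w|^2)=\esf_\phi(\chi_\varDelta)\,\esf_\phi(|w|^2)$ a.e.\ $[\mu]$ whenever $f=w\chi_\varDelta\in\dz{\mpcfw}$ with $f_w=\chi_\varDelta\in L^2(\mu)$. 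I would take $\varDelta\subseteq\{\alpha<|w|<\beta\}\cap\{1/(\hfw\circ\phi)<K\}$ of finite $\mu$-measure; a computation paralleling Remark \ref{tolek} shows $f\in\dz{\mpcfw}$. Integrating the displayed equality against $F\in\phi^{-1}(\ascr)$ yields $\int_{F\cap\varDelta}V\,\D\mu=0$ with $V:=|w|^2-\esf_\phi(|w|^2)$. As $\varDelta$, $\alpha$, $\beta$ and $K$ vary, these sets form a separating family that covers $\{w\neq 0\}$ up to $\mu$-null sets (by $\sigma$-finiteness of $\mu$ and $\hfw\circ\phi>0$ a.e.\ $[\mu_w]$), so $V\chi_F=0$ a.e.\ on $\{w\neq 0\}$ for every $F\in\phi^{-1}(\ascr)$; taking $F=X$ delivers (c). Combining with (b$_1$)$\Rightarrow$(a$_2$) gives (b$_1$)$\Rightarrow$(a$_1$), closing the equivalence.
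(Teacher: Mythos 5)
Your proposal is correct and follows essentially the same route as the paper: the trivial implication (a$_1$)$\Rightarrow$(a$_2$), the formula-based identification (a$_2$)$\Leftrightarrow$(b$_1$) via Theorem~\ref{bazant}, Corollary~\ref{wieden} and $M_w^\dag f=f_w$, the chain (b$_1$)$\Rightarrow$(c)$\Rightarrow$(b$_2$)$\Rightarrow$(a$_3$) built on the identity $\efw(f_w)\circ\phi^{-1}\hfw=\esf_\phi(f\bar w)\circ\phi^{-1}\hsf_\phi$ together with $\hfw=\esf_\phi(|w|^2)\circ\phi^{-1}\hsf_\phi$, and closing the loop through (a$_3$). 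The only difference is cosmetic: in (b$_1$)$\Rightarrow$(c) you localize with test functions $w\chi_\varDelta$ (a slightly more explicit justification of the separating family), whereas the paper integrates the same identity over all of $X$ for arbitrary $f\in\dz{\cfw^\dag}$ and concludes directly.
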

\begin{proof}
That (a$_1$) implies (a$_2$) is trivial. 

Since $M_w^\dag f= f_w$ for $f\in\dz{M_w^\dag}$, that (a$_2$) is equivalent to (b$_1$) follows from Theorem \ref{bazant} and Corollary \ref{wieden}.

Assuming (b$_1$), we get $\efw(f_w)\circ\phi^{-1}\hfw=\esf_\phi(f_w)\circ\phi^{-1}\esf_\phi(|w|^2)\circ\phi^{-1}\hsf_\phi$  a.e. $[\mu]$ for every $f\in \dz{\cfw^\dag}$ by \cite[Proposition 116]{b-j-j-sW}. Then we get
\begin{align*}
\int_X \efw(f_w)\circ\phi^{-1}\hfw\D\mu=\int_X \esf_\phi(f_w)\circ\phi^{-1}\esf_\phi(|w|^2)\circ\phi^{-1}\hsf_\phi \D\mu, \quad f\in \dz{\cfw^\dag},
\end{align*}
and thus\allowdisplaybreaks
\begin{align*}
\int_X f_w |w|^2\D\mu&=
\int_X \efw(f_w)\D\mu_w=\int_X \esf_\phi(f_w)\esf_\phi(|w|^2)\D\mu\\
&=\int_X f_w\esf_\phi(|w|^2)\D\mu,\quad f\in \dz{\cfw^\dag}.
\end{align*}
This implies (c).

Assume (c). We note that for $f\in L^2(1+|w|^{-2}\D\mu)$ both the expressions $\efw(f_w)\circ\phi^{-1}$ and $\esf_\phi(f_w)\circ\phi^{-1}$ make sense. Moreover, we have\allowdisplaybreaks
\begin{align*}
\int_{\varDelta} \efw(f_w)\circ\phi^{-1}\hfw\D\mu&=\int_{\phi^{-1}(\varDelta)} \efw(f_w)\D\mu_w=\int_{\phi^{-1}(\varDelta)} f_w |w|^2\D\mu\\
&=\int_{\phi^{-1}(\varDelta)} f_w\esf_\phi(|w|^2)\D\mu,
=\int_{\phi^{-1}(\varDelta)} \esf_\phi(f_w)\esf_\phi(|w|^2)\D\mu\\
&=\int_{\varDelta} \esf_\phi(f_w)\circ\phi^{-1}\esf_\phi(|w|^2)\circ\phi^{-1}\hsf_\phi\D\mu\\
&=\int_{\varDelta} \esf_\phi(f_w)\circ\phi^{-1}\hfw\D\mu, \quad f\in L^2(1+|w|^{-2}\D\mu),\ \varDelta\in\ascr.
\end{align*}
This implies $\efw(f_w)\circ\phi^{-1}\hfw=\esf_\phi(f_w)\circ\phi^{-1}\hfw$  a.e. $[\mu]$ for $f\in L^2(1+|w|^{-2}\D\mu)$. Since $\{\esf_\phi(\cdot)\circ    \phi^{-1}\neq0\}\cap\{\efw(\cdot)\circ\phi^{-1}\neq 0\}\subseteq \{\hfw \neq 0\}$ (see \cite[Proposition 116]{b-j-j-sW}) we get (b$_2$).

Assume (b$_2$). Suppose $f\in\dz{C_\phi^\dag M_w^\dag}$. Then $f\in\dz{M_w^\dag}$ or, equivalently, $f\in L^2((1+|w|^{-2})\D\mu)$. Thus $\esf_{\phi}(f_w)\circ\phi^{-1}$ makes sense and, since $M_w^\dag f\in\dz{C_\phi^\dag}$, belongs to $L^2(\mu)$. This implies that $\efw(f_w)\circ\phi^{-1}\in L^2(\mu)$, which means $f\in \dz{\mpcfw}$. Moreover, $\mpcfw f= C_\phi^\dag M_w^\dag f$. Hence, (a$_3$) is satisfied.

Since, as we proved above, (a$_2$) yields (a$_3$), we see that (a$_2$) implies (a$_1$). This completes the proof.
\end{proof}
Assuming that $M_w$ is bounded from below enables using the adjoints and showing that the Radon-Nikodym derivatives associated to the adjoints of $C_\phi$ and $\cfw$ need to be related whenever $\mpcfw=(M_wC_\phi)^\dag=C_\phi^\dag M_w^\dag$ hold.
\begin{pro}\label{konopiste}
Let $\cfw$ be densely defined. Assume that $\alpha< |w|$ a.e. $[\mu]$ with some positive $\alpha\in\rbb$. Then $C_\phi$ is densely defined and $\cfw=M_wC_\phi$. Moreover, $(M_wC_\phi)^\dag=C_\phi^\dag M_w^\dag$ if and only if the the following two conditions hold:
\begin{itemize}
    \item[(i)] $\hsf_{\phi,\hat w}\leq \beta\,  \hsf_{\phi, \hat{\mathbf{1}}}$ a.e. $[\mu]$ with a positive $\beta\in\rbb$, where $\hat w=\frac{w}{\hfw\circ \phi}$ and $\hat{\mathbf{1}}=\frac{1}{\hsf_\phi\circ\phi}$,
    \item[(ii)] $|w|^2=\esf_\phi(|w|^2)$ a.e. $[\mu]$.
\end{itemize}
\end{pro}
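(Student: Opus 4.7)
The plan is to handle the two halves of the statement in turn: first the identification $\cfw=M_w C_\phi$ with $C_\phi$ densely defined, and then the equivalence characterising $(M_wC_\phi)^\dag=C_\phi^\dag M_w^\dag$.

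For the first half, the hypothesis $\alpha<|w|$ a.e.\ $[\mu]$ gives the pointwise estimate $|f\circ\phi|^2\Le\alpha^{-2}|w(f\circ\phi)|^2$, so $w(f\circ\phi)\in L^2(\mu)$ forces $f\circ\phi\in L^2(\mu)$. Consequently $\dz{\cfw}\subseteq\dz{C_\phi}$, which makes $C_\phi$ densely defined and also yields $\dz{M_wC_\phi}=\dz{\cfw}$ with matching actions, so $\cfw=M_wC_\phi$. Theorem~\ref{bt} then turns the wanted equality into $\mpcfw=C_\phi^\dag M_w^\dag$.

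For the equivalence I would invoke Theorem~\ref{dt}. Since $w\ne 0$ a.e.\ $[\mu]$, its condition~(c) is exactly (ii). The bound $|w|^{-2}\Le\alpha^{-2}$ gives $L^2((1+|w|^{-2})\D\mu)=L^2(\mu)$, and $|f_w|^2\Le\alpha^{-2}|f|^2$ keeps $f_w\in L^2(\mu)$ for every $f\in L^2(\mu)$; therefore (b$_2$) implies (b$_1$) in our setting. Chaining (a$_1$)$\Leftrightarrow$(a$_2$)$\Leftrightarrow$(b$_1$) with (c)$\Rightarrow$(b$_2$)$\Rightarrow$(a$_3$)$+$(b$_1$) from Theorem~\ref{dt} yields (a$_1$)$\Leftrightarrow$(ii). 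It only remains to show that (i) is automatic under our standing hypothesis, so that the single condition (ii) and the combined condition (i)+(ii) are interchangeable. For this I would compute $\hsf_{\phi,\hat w}$ and $\hsf_{\phi,\hat{\mathbf 1}}$ directly from \eqref{l2-new}: testing the defining identity of the Radon--Nikodym derivative against $\chi_\varDelta/\hfw^2$ and $\chi_\varDelta/\hsf_\phi^2$ respectively yields
\begin{align*}
\hfw\cdot\hsf_{\phi,\hat w}=\chi_{\{\hfw>0\}}\quad\text{and}\quad\hsf_\phi\cdot\hsf_{\phi,\hat{\mathbf 1}}=\chi_{\{\hsf_\phi>0\}}\quad\text{a.e.\ $[\mu]$.}
\end{align*}
Combined with $\{\hfw>0\}=\{\hsf_\phi>0\}$ a.e.\ (\cite[Proposition~121]{b-j-j-sW}) and the identity $\hfw=\esf_\phi(|w|^2)\hsf_\phi\Ge\alpha^2\hsf_\phi$, this gives $\hsf_{\phi,\hat w}\Le\alpha^{-2}\hsf_{\phi,\hat{\mathbf 1}}$ a.e.\ $[\mu]$, i.e.\ (i) holds with $\beta=\alpha^{-2}$.

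The main obstacle, I expect, will be the careful justification of the Radon--Nikodym computation for $\hsf_{\phi,\hat w}$: one has to verify that $\hfw\circ\phi>0$ holds $\mu$-a.e.\ so that $\hat w$ is well defined (this is precisely where $|w|>\alpha$ enters again, by forcing the $\mu_w$-null set $\{\hfw\circ\phi=0\}$ to be $\mu$-null), and to track the exceptional set $\{\hfw=0\}$ consistently through the successive change-of-measure identities hidden in~\eqref{l2-new}.
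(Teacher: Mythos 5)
Your proof is correct, but it takes a genuinely different route from the paper's. For the equivalence, the paper passes to adjoints: from $(\mpcfw)^*=(C_\phi^\dag M_w^\dag)^*\supseteq (M_w^\dag)^*(C_\phi^\dag)^*$ and Proposition \ref{ruczaj} it extracts the inclusion $C_{\phi,\hat w}\supseteq M_{\bar w^{-1}}C_{\phi,\hat{\mathbf 1}}$, reads off the domain inclusion $L^2((1+\hsf_{\phi,\hat w})\D\mu)\supseteq L^2((1+\hsf_{\phi,\hat{\mathbf 1}})\D\mu)$, and invokes \cite[Corollary 12.4]{b-j-j-s-ampa} to obtain (i); conversely, (i) and (ii) restore that inclusion and a double-adjoint argument yields condition (a$_2$) of Theorem \ref{dt}. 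You instead stay entirely inside Theorem \ref{dt}: since $\alpha<|w|$ forces $L^2((1+|w|^{-2})\D\mu)=L^2(\mu)\supseteq\dz{\mpcfw}$, condition (b$_2$) implies (b$_1$), and the chain closes to (a$_1$) $\Leftrightarrow$ (c) $\Leftrightarrow$ (ii) without any adjoint machinery. Your additional computation --- $\hfw\,\hsf_{\phi,\hat w}=\chi_{\{\hfw>0\}}$ and $\hsf_\phi\,\hsf_{\phi,\hat{\mathbf 1}}=\chi_{\{\hsf_\phi>0\}}$ from \eqref{l2-new}, combined with $\hfw=\esf_\phi(|w|^2)\circ\phi^{-1}\,\hsf_\phi\Ge\alpha^2\hsf_\phi$ --- is sound (it agrees with the discrete formulas in Example \ref{hms} and in the proof of Proposition \ref{wariat}), and it buys a sharpening the paper's argument does not make explicit: under the standing hypothesis $\alpha<|w|$, condition (i) holds automatically with $\beta=\alpha^{-2}$, so the characterization collapses to condition (ii) alone. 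This is worth flagging, because it sits uneasily with the remark following the proposition, which cites the proposition as evidence that condition (c) of Theorem \ref{dt} alone is insufficient; on your reading that insufficiency can only occur outside the regime $\alpha<|w|$. Two small points to tidy up: before discussing $\dz{C_\phi}$ you should note that $\mu\Le\alpha^{-2}\mu_w$ gives $\mu\circ\phi^{-1}\ll\mu$, so that $C_\phi$ is well defined at all (the paper disposes of this first part by citing \cite[Proposition 111]{b-j-j-sW}), and in the final Radon--Nikodym computation one should record that $\hfw<\infty$ a.e.\ $[\mu]$ (dense definiteness of $\cfw$) alongside your observation that $\{\hfw\circ\phi=0\}$ is $\mu$-null, so that $\hat w$ is a genuine finite-valued weight.
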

\begin{proof}
The first part of the claim follows from \cite[Proposition 111]{b-j-j-sW}.

Assume that $(M_wC_\phi)^\dag=C_\phi^\dag M_w^\dag$. By Theorems \ref{dt} and \ref{bt} we get (ii) and $(\mpcfw)^*=(C_\phi^\dag M_w^\dag)^*$. It is known that $B^*A^*\subseteq (AB)^*$ for any operators $A$ and $B$ such that all the expressions make sens, we have $(C_\phi^\dag M_w^\dag)^*\supseteq(M_w^\dag)^*(C_\phi^\dag)^*$. Thus using Proposition \ref{ruczaj} we get
\begin{align}\label{len}
C_{\phi,\hat w}\supseteq  M_{\bar w^{-1}}C_{\phi,\hat{\mathbf{1}}}.
\end{align}
Since $M_{\bar w^{-1}}$ is a bounded operator on $L^2(\mu)$, $\dz{M_{\bar w^{-1}}C_{\phi,\hat{\mathbf{1}}}}=\dz{C_{\phi,\hat{\mathbf{1}}}}$. Thus $L^2((1+\hsf_{\phi,\hat{w}})\D\mu)=\dz{C_{\phi,\hat w}}\supseteq\dz{C_{\phi,\hat{\mathbf{1}}}}=L^2((1+\hsf_{\phi,\hat{\mathbf{1}}})\D\mu)$. Applying \cite[Corollary 12.4]{b-j-j-s-ampa} we deduce (i).

Assume that (i) and (ii) hold. From (i) we deduce $\dz{C_{\phi,\hat w}}\supseteq\dz{C_{\phi,\hat{\mathbf{1}}}}=\dz{M_{\bar w^{-1}}C_{\phi,\hat{\mathbf{1}}}}$. This and (ii)  imply \eqref{len}.  Thus we get $(\mpcfw)^*\supseteq (M_w^\dag)^*(C_\phi^\dag)^*$. Taking the adjoints and using the closedness of the operators in question and the boundedness of $M_w^\dag$, we get 
\begin{align*}
\mpcfw=(\mpcfw)^{**}\subseteq \big((M_w^\dag)^*(C_\phi^\dag)^*\big)^*=(C_\phi^\dag)^{**}(M_w^\dag)^{**}=C_\phi^\dag M_w^\dag   
\end{align*}
Applying Theorem \ref{dt} completes the proof.
\end{proof}
\begin{rem}
We note that, in view of Proposition \ref{konopiste}, the condition (c) of Theorem \ref{dt} alone is not sufficient for the equality $\mpcfw=C_\phi^\dag M_w^\dag$.
\end{rem}
Let us revisit briefly Example \ref{hiszpa}. One can easily calculate that for all $x$'s different from $0$ we have
\begin{align*}
\hsf_{\phi,\hat w}(x)=\frac{1}{|w(\phi^{-1}(x))|^2},\quad \hsf_{\phi,\hat{\mathbf{1}}}(x)=1,
\end{align*}
which suggests that the inequality $\hsf_{\phi,\hat w}\leqslant \beta \hsf_{\phi,\hat{\mathbf{1}}}$ is interconnected with the boundedness from below of $|w|$. 
That this is indeed a case is shown in the following.
\begin{pro}\label{wariat}
Let $(X,\ascr,\mu)$ be a discrete measure space, $\phi\colon X\to X$ and $w\colon X\to \cbb\setminus\{0\}$. Assume that $C_\phi$ and $\cfw$ are densely defined and $\esf_\phi(|w|^2)=|w|^2$ a.e. $[\mu]$. Then the conditions:
\begin{itemize}
\item[(i)] $\alpha<|w|$ a.e. $[\mu]$ with a positive $\alpha\in\rbb$,
\item[(ii)] $\hsf_{\phi,\hat w}\leq \beta\,  \hsf_{\phi, \hat{\mathbf{1}}}$ a.e. $[\mu]$ with a positive $\beta\in\rbb$,
\end{itemize}
are equivalent. Moreover, if any of the two above conditions holds, then $(M_wC_\phi)^\dag=C_\phi^\dag M_w^\dag$.
\end{pro}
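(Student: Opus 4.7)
The plan is to exploit the explicit atomic formulas of Example \ref{hms} together with \cite[Proposition 116]{b-j-j-sW} to reduce both conditions (i) and (ii) to pointwise inequalities over the fibers $\phi^{-1}(\{x\})$. The standing hypothesis $\esf_\phi(|w|^2)=|w|^2$ a.e.\ $[\mu]$ forces $|w|^2$ to be $\phi^{-1}(\ascr)$-measurable, and in a discrete setting with $\ascr=2^X$ this is equivalent to saying that $|w|^2$ is constant on each fiber $\phi^{-1}(\{x\})$; accordingly, I introduce $W\colon X\to(0,\infty)$ with $|w|^2=W^2\circ\phi$. Combined with \cite[Proposition 116]{b-j-j-sW} this gives the identity $\hfw\circ\phi=|w|^2\,(\hsf_\phi\circ\phi)=(W^2\circ\phi)(\hsf_\phi\circ\phi)$ a.e.\ $[\mu]$.

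Substituting into \eqref{hpr} applied to $\hat w=w/(\hfw\circ\phi)$ and $\hat{\mathbf 1}=1/(\hsf_\phi\circ\phi)$ (as in Example \ref{hiszpa}), I expect to obtain the clean formula
\begin{align*}
\hsf_{\phi,\hat w}(x)=\frac{1}{W(x)^2}\,\hsf_{\phi,\hat{\mathbf 1}}(x)\quad\text{for $\mu$-a.e.\ $x$.}
\end{align*}
Once this is in hand, the implication (i)$\Rightarrow$(ii) is immediate with $\beta=\alpha^{-2}$. For (ii)$\Rightarrow$(i), I would divide by $\hsf_{\phi,\hat{\mathbf 1}}(x)$ wherever it is strictly positive and deduce $W(x)^2\geq\beta^{-1}$ there; since $w$ is nowhere zero and $|w(y)|=W(\phi(y))$, this translates into $|w|\geq\beta^{-1/2}$ on the relevant set.

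The main delicacy is locating the ``exceptional'' set of $x$'s where $\hsf_{\phi,\hat{\mathbf 1}}(x)=0$ and verifying that its pull-back by $\phi$ is $\mu$-null. Here I would use that $C_\phi$ is densely defined, hence $\hsf_\phi<\infty$ a.e.\ $[\mu]$, so $(\hsf_\phi\circ\phi)<\infty$ on a set whose complement does not contribute any atom to the fibers; consequently $\hsf_{\phi,\hat{\mathbf 1}}(x)>0$ for $\mu$-a.e.\ $x\in\phi(\at\mu)$. Since every atom $y$ of $\mu$ maps into $\phi(\at\mu)$, the bound on $W$ propagates back to $|w(y)|\geq\beta^{-1/2}$ for $\mu$-a.e.\ $y$, giving (i) for any $\alpha<\beta^{-1/2}$.

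For the ``moreover'' part, I invoke Proposition \ref{konopiste} directly: its hypothesis $\alpha<|w|$ a.e.\ $[\mu]$ is our (i); its condition~(i) is our (ii); and its condition~(ii) is the standing assumption $|w|^2=\esf_\phi(|w|^2)$ a.e.\ $[\mu]$. All three being in force, Proposition \ref{konopiste} yields $(M_wC_\phi)^\dag=C_\phi^\dag M_w^\dag$, completing the argument.
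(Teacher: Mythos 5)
Your proposal is correct and follows essentially the same route as the paper: both compute $\hsf_{\phi,\hat w}$ and $\hsf_{\phi,\hat{\mathbf 1}}$ fiberwise via the discrete formula \eqref{hpr} and \cite[Proposition 116]{b-j-j-sW}, use the $\phi^{-1}(\ascr)$-measurability of $|w|^2$ (constancy on fibers) to reduce (ii) to the pointwise bound $1/\esf_\phi(|w|^2)\circ\phi^{-1}\le\beta$, identify this with essential boundedness of $|w|$ from below, and then obtain the ``moreover'' part directly from Proposition \ref{konopiste}. Your extra care about the set where $\hsf_{\phi,\hat{\mathbf 1}}$ vanishes is a harmless refinement of the same argument.
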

\begin{proof}
Using \eqref{hpr} and \cite[Proposition 116]{b-j-j-sW} we can easily show that
\begin{align*}
\hsf_{\phi,\hat w}(x)&=\frac{1}{\mu(x)\hsf_\phi^2(x)}\sum_{y\in\phi^{-1}(\{x\})}\frac{\mu(y)}{|w(y)|^2},\\
\hsf_{\phi, \hat{\mathbf{1}}}(x)&=\frac{1}{\mu(x)\hsf_\phi^2(x)}\sum_{y\in\phi^{-1}(\{x\})}\mu(y),
\end{align*}
for every $x\in\at{\mu}$. Since $|w|^2=\esf_{\phi}(|w|^2)$ is constant on sets on the form $\phi^{-1}(\{x\})$, we see that (ii) holds if and only if $\frac{1}{\esf(|w|^2)\circ\phi^{-1}(x)}<\beta$ for all $x\in\at{\mu}$. In turn, the latter condition holds if and only if $|w|$ is $\mu$-essentially bounded from below, i.e. (i) holds.

The ``moreover'' part of the claim follows from Proposition \ref{konopiste}.
\end{proof}

\bibliographystyle{amsalpha}

\begin{thebibliography}{99}
\bibitem{ash} R. B. Ash, Probability and measure theory, Harcourt/Academic Press, Burlington, 2000.
\bibitem{bir-sol} M. Sh. Birman, M. Z. Solomjak, Spectral theory of selfadjoint operators in Hilbert space, D. Reidel Publishing Co., Dordrecht, 1987.
\bibitem{ben-gre} A. Ben-Israel, T. N. E. Greville, Generalized inverses, CMS Books in Mathematics {\bf 15}, Springer-Verlag, New York, 2003.
\bibitem{b-j-j-s-ampa} P. Budzy\'{n}ski, Z. J. Jab{\l}o\'nski, I. B. Jung, J. Stochel, On unbounded composition operators in $L^2$-spaces, Ann. Mat. Pura Appl. {\bf 193} (2014) 663-688.
\bibitem{b-j-j-sW} P. Budzy\'{n}ski, Z. J. Jab{\l}o\'nski, I. B. Jung, J. Stochel, Unbounded weighted composition operators in $L^2$-spaces, Lectures Notes in Mathematics {\bf 2209} (2018), Springer.
\bibitem{fre} I. Fredholm, Sur une classe d'equations fonctionnelles, Acta Math. {\bf 27} (1903), 365–
390.
\bibitem{hes-pjm-1961} M. R. Hestenes, Relative self-adjoint operators in Hilbert spaces, Pacific J. Math, {\bf 11} (1961), 1315-1357.
\bibitem{jab-bims} M. R. Jabbarzadeh, Hyers-Ulam stability of weighted composition operators on $L^p$-spaces, Bull. Iranian Math. Soc. {\bf 32} (2006) 67-73.
\bibitem{jab-bak-bbmsss} M. R. Jabbarzadeh, M. Jafari Bakhshkandi, Stability constants for weighted composition operators on $L^p(\varSigma)$, Bull. Belg. Math. Soc. Simon Stevin {\bf 24} (2017), 271–281.
\bibitem{moo} E. H. Moore, On the reciprocal of the general algebraic matrix, Bull. Amer. Math. Soc. {\bf 26} (1920), 394–395.
\bibitem{pen} R. Penrose, A generalized inverse for matrices, Proc. Cambridge Philos. Soc. {\bf 51} (1955), 406–413.
\bibitem{rei} W. T. Reid, Generalized inverses of differential and integral operators, Proceedings of the Symposium on Theory
and Applications of Generalized Inverses of Matrices, Lubbock, Texas Tech. Press, (1968), 1–25.
\bibitem{rud} W. Rudin, Real and Complex Analysis, McGraw-Hill, New York 1987.
\bibitem{wan-wei-qia} G. Wang, Y. Wei, S. Quiao, Generalized inverses: theory and computations, Developement in Mathematics {\bf 53}, Springer Nature Singapore Pte Ltd. and Science Press 2018.
\end{thebibliography}

\end{document}